\numberwithin{equation}{section}
\theoremstyle{plain}
\newtheorem{theorem}{Theorem}[section]
\newtheorem{lemma}[theorem]{Lemma}
\newtheorem{corollary}{Corollary}[section]
\newtheorem{proposition}{Proposition}[section]
\theoremstyle{remark}
\newtheorem{remark}{Remark}[section]
\newtheorem{assumption}{Assumption}[section]
\newcommand{\R}{\mathbb{R}}
\newcommand{\N}{\mathbb{N}}
\newcommand{\E}{\mathbb{E}}
\newcommand{\kdp}{{k^\delta_{\mathrm{dp}}}}
\newcommand{\mdp}{{m^\delta_{\mathrm{dp}}}}
\newcommand{\xdp}{x^\delta_{\kdp,\mdp}}
\newcommand{\omd}{\chi_{\Omega_{\delta/\rho}}}
\begin{document}

\begin{frontmatter}
\title{Discretisation-adaptive regularisation of statistical inverse problems}
\runtitle{Discretisation-adaptive regularisation of statistical inverse problems}

\begin{aug}
\author[A]{\fnms{Tim}~\snm{Jahn}\ead[label=e1]{jahn@ins.uni-bonn.de}},
\address[A]{INS and HCM,
	University of Bonn\printead[presep={,\ }]{e1}}

\end{aug}

\begin{abstract}
We consider linear inverse problems under white noise. These types of problems can be tackled with, e.g., iterative regularisation methods and the main challenge is to determine a suitable stopping index for the iteration. Convergence results for popular adaptive methods to determine the stopping index often come along with restrictions, e.g. concerning the type of ill-posedness of the problem, the unknown solution or the error distribution. In the recent work \cite{jahn2021optimal} a modification of the discrepancy principle, one of the most widely used adaptive methods, applied to spectral cut-off regularisation was presented which provides excellent convergence properties in general settings. Here we investigate the performance of the modified discrepancy principle with other filter based regularisation methods and we hereby focus on the iterative Landweber method. We show that the method yields optimal convergence rates and present some numerical experiments confirming that it is also attractive in terms of computational complexity. The key idea is to incorporate and modify the discretisation dimension in an adaptive manner.\\
\end{abstract}

\begin{keyword}[class=MSC]
\kwd[Primary ]{ 	62G07  }
\kwd{ 	65J20  }
\end{keyword}

\begin{keyword}
\kwd{statistical inverse problems}
\kwd{Landweber iteration}
\kwd{discrepancy principle}
\kwd{optimal convergence}
\end{keyword}

\end{frontmatter}

\section{Introduction}

Let $K:\mathcal{X}\to\mathcal{Y}$ be a compact operator with dense range between infinite-dimensional Hilbert spaces. We aim to solve the following equation

\begin{equation}\label{int:eq1}
Kx = y^\delta
\end{equation}

for $y^\delta = y^\dagger+\delta Z$ a noisy perturbation of the unknown true data $y^\dagger= Kx^\dagger$, with $x^\dagger=K^+y^\dagger$ the minimum norm solution (thus $K^+$ denotes the Moore-Penrose inverse of $K$). Here $Z$ is centered white noise with finite second moments, i.e., it holds that

\begin{itemize}
\item $\E[(Z,y)]=0$,
\item $\E[(Z,y)(Z,y')] = (y,y')$,
\item $(Z,y) \stackrel{d}{=} \frac{\|y\|}{\|y'\|}(Z,y')$
\end{itemize}

for all $y, y'\in\mathcal{Y}$, where $\delta>0$ denotes the noise level. This is the classical setting of a statistical inverse problem, see, e.g., the references Cavalier \cite{cavalier2011inverse}, Bissantz \textit{et al.} \cite{bissantz2007convergence} or O'Sullivan \cite{o1986statistical}. In general, inverse problems are ill-posed,  which means that its solution is unstable with respect to measurement noise.  Inverse problems are solved by regularisation, i.e., by replacing the unstable (pseudo)-inverse $K^+$ of $K$with a whole family of stable substitutes (indexed by a parameter, $k\in\N$ or $\alpha>0$). Prominent regularisation methods are Tikhonov regularisation, spectral cut-off and Landweber iteration. The concrete choice of the substitute, i.e., effectively the choice of the regularisation parameter $k$ or $\alpha$ is then done dependent on the noise level $\delta>0$ and eventually the measurement $y^\delta$and the overall goal is to obtain convergence (in a suitable sense) against the true solution $x^\dagger$ as the noise level $\delta$ tends to zero. Comprehensive convergence results for various regularisation methods are available for a priori parameter choice rules  depending only on the noise level $\delta$, see \cite{bissantz2007convergence}. However, these rules share a central drawback as they usually do not provide the optimal rate of convergence, unless one has very specific additional information on the unknown solution $x^\dagger$. From this drawback arises the need to study other parameter choice rules, which also take the measurement $y^\delta$ into account and which shall adapt automatically to the unknown properties of $x^\dagger$. For statistical inverse problems various adaptive parameter choice rules combined with different regularisation methods are known to be (almost) optimal. Here we name cross validation (Wahba \cite{wahba1977practical}), unbiased or penalized empirical risk minimization (Cavalier \textit{et al.}\cite{cavalier2002oracle,cavalier2006risk}) and the balancing principle (Math\'{e} and Pereverzyev \cite{mathe2006regularization}) due to Lepski \cite{lepskii1991problem}. Compared to popular methods from the deterministic setting, e.g., the discrepancy principle, these methods are computational substantially more expensive and may be not feasible in very high-dimensional settings due to the fact that usually a large set of estimators has to be computed and compared. The discrepancy principle is not directly applicable in the statistical setting, since it is based on the paradigm that the residual norm of the substitute should be approximately equal to the measurement error, which is undefined here due to unboundedness of white noise. In recent years different modifications of the discrepancy principle suited for the white noise setting have been studied. One approach is to presmooth \eqref{int:eq1} in order to obtain bounded noise (see Blanchard and Math\'{e} \cite{blanchard2012discrepancy} and Lu and Math\'{e} \cite{lu2014discrepancy}), while another is to formulate the principle directly on a discrete finite-dimensional statistical inverse problem in Blanchard, Hoffmann and Rei\ss{} \cite{ blanchard2018optimal, blanchard2018early} and Lucka \textit{et al.} \cite{lucka2018risk}.

 In this article we introduce a modification of the discrepancy principle which actively incorporates the choice of the discretisation size of the infinite-dimensional problem. The important role of the concrete discretisation is a classical topic of research for inverse problems, see, e.g., Math\'{e} and Pereverzyev \cite{mathe2001optimal, mathe2006regularization} or de Vito, Caponnetto and Rosasco \cite{de2006discretization}  for results in a statistical setting. Hereby, often the paradigm is that the discretisation is fixed and cannot be adapted. Compared to the above mentioned versions of the discrepancy principle the here proposed method turns out to be more robust, in the sense that it does not restrict to certain error distributions (e.g, Gaussian ones), operators (e.g., mildly ill-posed Hilbert-Schmidt operators) and smoothness classes of the true solution. Moreover, the numerical study at the end of this paper indicates that it might also be computationally more effective in some settings. The main reason for this is that the hyper parameter for the discrepancy principle can be chosen in a far less aggressive fashion than, e.g.,  in \cite{blanchard2018early, blanchard2018optimal}. 

The approach here is inspired by a recent modification of the discrepancy principle from Jahn \cite{jahn2021optimal, jahn2022probabilistic} for spectral cut-off, which is proven to have excellent theoretical properties. However, its applicability is restricted due to the fact that it needs the singular value decomposition. In this manuscript we apply the proposed ideas to the iterative Landweber method, extensions to other methods as , e.g., Tikhonov are possible, see Corollary \ref{cor4} below.

We consider a semi-discrete setting and introduce a family of operators $P_m:\mathcal{Y}\to U_m$ with $U_m\subset \mathcal{Y}$ finite-dimensional and apply the classical Landweber method to $P_mK$ and $P_my^\delta$, which yields the following recursive sequence  become 

\begin{equation}\label{int0:eq1}
x_{k+1,m}^\delta:= x^\delta_{k,m} - (P_mK)^*(P_mKx^\delta_{k,m}-P_my^\delta)
\end{equation}

with $k\ge 0$ and $x^\delta_{0,m}=0$. Note that we assume for simplicity that $\|P_mK\|\le 1$, the general case can be handled with introducing an additional step size parameter. The overall goal is to determine the stopping index $k$ and the discretisation dimension $m$, depending on the noise level $\delta$ and the measurement $y^\delta$. All theoretical results in this paper are restricted to the case where we measure along the singular vectors of $K$, i.e.,

\begin{equation}\label{int:eq3c}
P_m = \sum_{j=1}^m (u_j,\cdot) u_j
\end{equation}

is the projection onto the first $m$ singular vectors of $K$; the exact definition of the singular value decomposition is given in the next section. Obviously this is a serious restriction since one of the main advantages of Landweber iteration, compared to spectral cut-off, is that knowledge of the singular value decomposition is not needed. Still, the analysis of this case gives important insights, e.g., in the necessity of the \texttt{while}-loop in Algorithm 1 and 2 below. Moreover, it is useful for the case of general discretisations as we will explain now. Many inverse problems arising in practice are discretisations of integral equations, e.g., via a Galerkin scheme. These discretisations are reasonably chosen such that they converge to the infinite-dimensional problem as the discretisation dimension grows. In particular, the singular value decomposition of the discretised problems converge (see Babu\v{s}ka and Osborn \cite{babuska1991eigenvalue} and Harrach, Jahn and Potthast \cite{harrach2020regularising}). Therefore, increasing the discretisation dimension can be seen as adding more and more singular vectors (of high frequency). In an application the initial discretisation is often determined by the experimental design and cannot be changed easily. The starting point for a concrete application is therefore a fixed usually very high dimensional discretisation of the ideal infinite dimensional problem. However, in the setting of discretised integral equations it is often possible to approximately construct lower-dimensional discretisation directly from the initial discretisation, e.g., via averaging or dropping out rows of the matrix. In Section \ref{sec:5} we describe and test such a practical discretisation scheme based on averaging of an a priori given high-dimensional discretisation of an integral equation numerically. 

The parameter choice rule of choice in this manuscript is the discrepancy principle, which follows the paradigm that the reconstruction should explain the data up to the noise level, i.e., that the residual norm approximately equals the measurement noise. Through the discretisation the measurement error becomes finite (precisely it holds that $\E\|P_my^\delta-P_my^\dagger\|^2=\sum_{j=1}^m\E(y^\delta-y^\dagger,u_j)^2 = m\delta^2$) which allows to apply the classical discrepancy principle as follows. For a fudge parameter $\tau>1$ we set 

\begin{equation}\label{int0:eq2}
k^\delta_{\rm dp}(m):= \min\left\{k\in\N~:~ \|P_mKx^\delta_{k,m}-P_my^\delta\| \le \tau \sqrt{m}\delta \right\}.
\end{equation}

In order to determine the final reconstruction in \eqref{int0:eq1} we have to give both, a discretisation dimension $m$ and a stopping index $k$. For that we propose to first calculate \eqref{int0:eq2} for various $m$ and the select the final parameters by Algorithm 1.

 \begin{algorithm}\label{algorithm1}
	\caption{Modified discrepancy principle for Landweber iteration}
	\begin{algorithmic}[1]
		\STATE Given Landweber iterates $x_{\kdp(m),m}^\delta$ and $x_{2\kdp(m),m}^\delta$ with $\kdp(m)$ determined by the discrepancy principle , $m\in\N$;
		\STATE \textit{Initialisation}
		\STATE Set $n=0$ and $m^{n}=1$ and $I^n:=0$;
		\STATE \textit{Calculate ratio}
		\WHILE{ $I_n<\frac{1}{2\tau e^4}$}
		\STATE $n=n+1$;
        \STATE $m^n:=\min\left\{m\ge m^{n-1}~:~\kdp(m)\ge\max_{m'\ge m^{n-1}}\kdp(m')\right\}$;
		\STATE $I_n:=\frac{\left\|P_mKx^\delta_{2\kdp(m^n),m^n}-P_my^\delta\right\|}{\left\|P_mKx^\delta_{\kdp(m^n),m^n}-P_my^\delta\right\|}$;
		\ENDWHILE
		\STATE $x^\delta_{\kdp,\mdp}:=x^\delta_{\kdp(m^n),m^n}$;
	\end{algorithmic}
\end{algorithm}

Algorithm 1 follows the paradigm to maximise the stopping index, cf. Jahn \cite{jahn2021optimal, jahn2022probabilistic}.  This paradigm may feel counter-intuitive in the first place, since choosing a large iteration index potentially destabilises the solution. However, the discrepancy principle is known to be robust (note that $\tau>1$!) which is exploited here, see also \cite{lucka2018risk}. The while-loop hereby guarantees that the discretisation dimension is chosen sufficiently large and will be explained in more detail in later sections, where we also give a counter example showing that the loop is unavoidable. Algorithm 1 seems to be computationally costly in comparison to \cite{blanchard2018optimal}. Still, in a concrete application we would not have the luxury to add singular vectors gradually.  Typically, in a Galerkin approximation scheme one could double the number of nodes for each successive discretisation yielding the logarithm of the initial dimension as the number of discretisation levels and thus comparably few discretisation levels have to be compared with each other. In the end, due to the fact that the fudge parameter $\tau$ can be chosen quite large without deteriorating accuracy, only very few iterations have to be performed for large discretisation dimension. On the contrary, the early stopping discrepancy principle implements \eqref{int0:eq2} with $\tau=1$ for one fixed usually large dimension, which can be unstable (for not too large noise level $\delta$) and costly, because of the slow convergence of the Landweber recursion. On the other hand, using $\tau>1$ for a single fixed large dimension typically underestimates the optimal stopping index and thus produces less accurate results. See the numerical experiments in Section \ref{sec:5}.

We conclude the introduction with two corollaries avoiding the technicalities of the main result. First there holds convergence for arbitrary operator and noise. 

\begin{corollary}\label{cor1}
	Let $K$ be compact with dense range, $\|K\|\le 1$ and $x^\dagger=K^+y^\dagger$ with $y^\dagger\in\mathcal{R}(K)$. Then for $x^\delta_{\kdp,\mdp}$ determined by Algorithm 1 there holds, for any $\varepsilon>0$,
	
	$$\mathbb{P}\left(\|\xdp-x^\dagger\|\le \varepsilon\right)\to 1$$
	
	as $\delta\to0$,i.e., $\xdp$ converges to $x^\dagger$ in probability. 
\end{corollary}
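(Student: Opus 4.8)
\emph{Reduction to the singular system.} Because the measurement \eqref{int:eq3c} is taken along the singular vectors, the projected iteration \eqref{int0:eq1} decouples completely over the singular system $(\sigma_j,u_j,v_j)$ of $K$, and the first step is to make this explicit. Writing $\hat x_j:=(v_j,x^\dagger)$ and $\xi_j:=(Z,u_j)$ (so that, by the white-noise assumptions, the $\xi_j$ are uncorrelated with mean $0$ and variance $1$), one finds $(v_j,x^\delta_{k,m})=\frac{1-(1-\sigma_j^2)^k}{\sigma_j}(u_j,y^\delta)$ for $j\le m$ and $(v_j,x^\delta_{k,m})=0$ for $j>m$, with $(u_j,y^\delta)=\sigma_j\hat x_j+\delta\xi_j$. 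This gives the exact decompositions
\begin{equation*}
\|x^\delta_{k,m}-x^\dagger\|^2=\sum_{j>m}\hat x_j^2+\sum_{j\le m}\Bigl[(1-\sigma_j^2)^{k}\hat x_j-\tfrac{1-(1-\sigma_j^2)^k}{\sigma_j}\delta\xi_j\Bigr]^2,
\end{equation*}
\begin{equation*}
\|P_mKx^\delta_{k,m}-P_my^\delta\|^2=\sum_{j\le m}(1-\sigma_j^2)^{2k}(\sigma_j\hat x_j+\delta\xi_j)^2,
\end{equation*}
reducing the statement to estimates for real sequences weighted by the $\xi_j$.

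\emph{Splitting error into approximation and noise.} I would bound the error by its noise-free companion plus a stochastic remainder. The approximation term $\sum_{j>m}\hat x_j^2+\sum_{j\le m}(1-\sigma_j^2)^{2k}\hat x_j^2$ tends to $0$ whenever $k\to\infty$ and $m\to\infty$: the tail vanishes because $\|x^\dagger\|^2=\sum_j\hat x_j^2<\infty$, and the middle sum vanishes by dominated convergence since $(1-\sigma_j^2)^{2k}\downarrow0$ for every $j$. The propagated-noise term $\delta^2\sum_{j\le m}\bigl(\tfrac{1-(1-\sigma_j^2)^k}{\sigma_j}\bigr)^2\xi_j^2$ is the dangerous one; using $\tfrac{1-(1-\sigma_j^2)^k}{\sigma_j}\le\min(k\sigma_j,\sigma_j^{-1})\le\sqrt k$, each coefficient is at most $k$ and increases in $k$. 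Hence on any event $\{\kdp\le\bar k,\ \mdp\le\bar m\}$ the noise term is dominated by $\delta^2\bar k\sum_{j\le\bar m}\xi_j^2$, whose expectation is $\delta^2\bar k\bar m$; Markov's inequality (all we may use under two moments) then makes it negligible as soon as $\delta^2\bar k\bar m\to0$, uniformly over the range of dimensions visited --- which calls for a maximal-inequality refinement rather than a single application of Markov.

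\emph{The adaptive choice and the while-loop.} The whole problem is thus to show, in probability, that Algorithm 1 selects $(\kdp,\mdp)$ with $\kdp\to\infty$ and $\mdp\to\infty$ (so the approximation error vanishes) yet admitting deterministic envelopes $\bar k\ge\kdp$, $\bar m\ge\mdp$ with $\delta^2\bar k\bar m\to0$ (so the noise vanishes). The lower bound on $\mdp$ is exactly what the while-loop enforces: its ratio test $I_n=\|P_{m^n}Kx^\delta_{2\kdp(m^n),m^n}-P_{m^n}y^\delta\|/\|P_{m^n}Kx^\delta_{\kdp(m^n),m^n}-P_{m^n}y^\delta\|<\tfrac{1}{2\tau e^4}$ keeps increasing $m$ while doubling the iteration count still shrinks the residual geometrically --- the signature of an $m$ too small relative to $\delta$ --- and only terminates once the residual at $\kdp$ has settled onto the noise floor carried by the slowly-decaying high-frequency directions. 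Converting the residual-level discrepancy bound $\|P_mKx^\delta_{\kdp,m}-P_my^\delta\|\le\tau\sqrt m\delta$ into the required upper envelopes for $\kdp$ and $\mdp$ is where the fudge parameter $\tau>1$ is spent, and is the technical core.

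\emph{Assembly and main obstacle.} Combining the three steps, on an event of probability tending to $1$ as $\delta\to0$ the approximation error and the propagated noise are each below any prescribed $\vare$, giving $\PP(\|\xdp-x^\dagger\|\le\vare)\to1$. I expect the genuine difficulty to be entirely in the previous paragraph: the two adaptive parameters are coupled because enlarging $m$ simultaneously relaxes the threshold $\sqrt m\delta$ and injects new frequencies, so $\kdp(m)$ is non-monotone and the selection rule $m^n=\min\{m\ge m^{n-1}:\kdp(m)\ge\max_{m'\ge m^{n-1}}\kdp(m')\}$ must be tracked by hand, while the constant $\tfrac{1}{2\tau e^4}$ has to be reverse-engineered from the worst-case geometric decay of the residual over one doubling of $k$. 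The announced counterexample, showing the loop cannot be dropped, confirms that this regime separation --- forcing $\mdp$ large enough without letting it grow so fast that $\delta^2\kdp\mdp\not\to0$ --- is an intrinsic feature and not a proof artefact.
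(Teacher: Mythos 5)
Your first two paragraphs (the spectral decomposition of the error and the residual, and the reduction of the noise term via Markov's inequality to finding envelopes $\bar k\ge\kdp$, $\bar m\ge\mdp$ with $\delta^2\bar k\bar m\to0$) are correct and consistent with how the paper analyses the iterates. But the proposal then stops exactly where the proof has to begin: the claims that Algorithm 1 yields $\kdp\to\infty$ and $\mdp\to\infty$ while admitting such envelopes are stated as goals ("the technical core", "the genuine difficulty") and never established, and these claims \emph{are} the corollary. Your description of the while-loop ("terminates once the residual settles onto the noise floor") is a heuristic, not an argument; the actual termination bound is Lemma \ref{lem1} ($\mdp\omd\le B_\tau m_*\omd$) together with \eqref{th2:eq1} ($\kdp\omd\le(4+\log(2\tau))\sigma_\mdp^{-2}$), and both proofs use a source condition in an essential way: the inequalities \eqref{lem1:eq1}, \eqref{lem1:eq2}, \eqref{lem1:eq3} all rest on \eqref{th2:eq00}, i.e.\ on the calibration of the reference index $m_*=\lceil x_{\alpha_*}\rceil$, $\alpha_*=\Theta^{-1}(\delta^2/\rho^2)$, through the index function $\varphi$. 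For a general $x^\dagger$ with no assumed smoothness, your plan provides no substitute for this reference scale, so the envelopes you need cannot even be written down, let alone shown to satisfy $\delta^2\bar k\bar m\to0$; there is also no argument that the loop terminates at all.

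The paper closes this gap not by a direct qualitative argument but by a reduction to Theorem \ref{th2}: by Corollary 2 of \cite{mathe2008general}, \emph{every} $x^\dagger$ admits a representation $x^\dagger=\varphi(K^*K)\xi$ for some concave, strictly increasing index function $\varphi$ (injectivity of $K$ can be assumed after restricting to $\mathcal{N}(K)^\perp$). One then verifies that any such concave $\varphi$ fulfills Assumption \ref{as1} — the qualification property \eqref{sec3:eq00} via a case distinction on where the supremum is attained, and convexity of $\lambda\mapsto\lambda\left(\varphi\cdot\varphi\right)^{-1}(\lambda)$ from convexity and monotonicity of $(\varphi^{-1})^2$ — and applies Theorem \ref{th2}, concluding because $\rho\,\varphi\left(\Theta^{-1}(\delta^2/\rho^2)\right)\to0$ as $\delta\to0$. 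If you want to salvage your direct route, the cleanest repair is precisely this reduction: the general source condition supplies the scale $(m_*,\alpha_*)$ against which $\mdp$ and $\kdp$ can be compared, after which your third paragraph becomes Lemma \ref{lem1} and your assembly step becomes the error decomposition in the proof of Theorem \ref{th2}.
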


Second we give some quantitative convergence rates. In inverse problems, such rates depend crucially of unknown abstract smoothness properties of the exact solution. Here we give the result for two classical settings, i.e., for polynomial ill-posed problems with H\"older source conditions and  for exponentially ill-posed problems with logarithmic source conditions. For these source conditions $\mathcal{X}_{\varphi, \rho}\subset \mathcal{X}$, which are subsets of the data space (see the next section for the exact definitions), we obtain minimax optimal rates.

\begin{corollary}\label{cor2}
	Let $K$ be either polynomially ill-posed or exponentially ill-posed (cf. \eqref{sec3:eq4} and \eqref{sec3:eq5}). Then, under H\"older \eqref{sec3:eq2} or logarithmic \eqref{sec3:eq3} source conditions respectively there exists $C>0$  such that
	
	$$\sup_{x^\dagger\in\mathcal{X}_{\varphi,\rho}}\mathbb{P}\left(\|\xdp-x^\dagger\|\le C \min_{m,k\in\N}\sup_{x\in\mathcal{X}_{\varphi,\rho}}\|x^\delta_{k,m}-x\|\right)\to 1$$
	
	as $\delta/\rho\to0$ ,i.e., with a probability converging to $1$ the minimax optimal convergence rate applies, where $x^\delta_{\kdp,\mdp}$ is determined by Algorithm 1.
	
\end{corollary}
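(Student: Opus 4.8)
The plan is to deduce Corollary~\ref{cor2} from the main oracle-type result of the paper (the theorem established in the subsequent sections), which asserts that with probability tending to one the adaptive reconstruction $\xdp$ obeys a bound of the form $\|\xdp-x^\dagger\|\le C\min_{m,k}\big(\|x^\dagger-x^\dagger_{k,m}\|+e_{\mathrm{st}}(k,m)\big)$, where $x^\dagger_{k,m}$ is the Landweber iterate run on the exact data $P_my^\dagger$ and $e_{\mathrm{st}}(k,m)$ is the propagated-noise term. Granting this, the corollary reduces to two tasks: (i) checking that the hypotheses of the main theorem are met under the polynomial/exponential ill-posedness assumptions \eqref{sec3:eq4}, \eqref{sec3:eq5} together with the source conditions \eqref{sec3:eq2}, \eqref{sec3:eq3}, and (ii) evaluating this oracle quantity and identifying it, up to a constant, with the right-hand side $\min_{m,k}\sup_{x\in\mathcal{X}_{\varphi,\rho}}\|x^\delta_{k,m}-x\|$.

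First I would make the Landweber filter explicit. Writing $r_k(\lambda)=1-(1-\lambda)^k$, the coefficient of $x^\dagger_{k,m}$ along the $j$-th right singular vector $v_j$ is $r_k(\sigma_j^2)$ for $j\le m$ and $0$ for $j>m$, so the bias splits into a truncation part $\sum_{j>m}(x^\dagger,v_j)^2$ and a filtering part $\sum_{j\le m}(1-\sigma_j^2)^{2k}(x^\dagger,v_j)^2$, while the stochastic term satisfies $e_{\mathrm{st}}(k,m)^2\asymp\delta^2\sum_{j\le m}r_k(\sigma_j^2)^2/\sigma_j^2$. Since $r_k$ acts as a soft cut-off at the scale $\sigma_j^2\sim 1/k$, the key structural fact is that Landweber has \emph{infinite qualification}: under any source condition $x^\dagger\in\mathcal{X}_{\varphi,\rho}$ the filtering bias is controlled by $\rho^2\varphi(1/k)^2$ up to a constant, so that both the Hölder and the logarithmic cases are covered without loss. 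It then remains to insert the spectral decay $\sigma_j\asymp j^{-a}$, respectively $\sigma_j\asymp e^{-\gamma j}$, sum the resulting series, and optimise the bias-variance sum jointly over $(k,m)$.

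The optimisation essentially factorises: $m$ is taken just large enough that the truncation part is dominated by the other two terms, after which $k$ balances the filtering bias $\rho\varphi(1/k)$ against $e_{\mathrm{st}}(k,m)$. Carrying this out reproduces the classical rates, namely a power of $\delta/\rho$ in the polynomial/Hölder case and a negative power of $\log(\rho/\delta)$ in the exponential/logarithmic case. As these coincide with the best worst-case error attainable within the Landweber family, one obtains $\min_{m,k}\sup_{x\in\mathcal{X}_{\varphi,\rho}}\|x^\delta_{k,m}-x\|\asymp r_\delta$, where $r_\delta$ is the minimax rate over $\mathcal{X}_{\varphi,\rho}$ (the matching lower bound being classical), and the oracle inequality delivers the stated bound with an absolute constant $C$.

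The main obstacle is twofold and, in both parts, is inherited from the main theorem rather than created in the corollary. First, the probability must be controlled \emph{uniformly} over the whole ball $\mathcal{X}_{\varphi,\rho}$, which requires uniform concentration of the discretised residual norm $\|P_my^\delta-P_my^\dagger\|^2$ around its mean $m\delta^2$; this is exactly what forces $\tau>1$ and the particular threshold $1/(2\tau e^4)$ in the \texttt{while}-loop. Second, and more delicate, one must ensure that the dimension $\mdp$ selected by Algorithm~\ref{algorithm1} does not exceed the oracle dimension by more than a constant factor, since otherwise $e_{\mathrm{st}}(\kdp,\mdp)$ would be inflated beyond the minimax level. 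Verifying that the loop condition halts the dimension in the correct regime, uniformly over $\mathcal{X}_{\varphi,\rho}$, is the crux of the argument and is precisely where the quantitative form of the main theorem enters; once it is in place, the rate computation above is routine.
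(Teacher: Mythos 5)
There is a genuine gap, and it sits exactly where you write ``the matching lower bound being classical.'' The right-hand side of Corollary \ref{cor2}, $\min_{m,k\in\N}\sup_{x\in\mathcal{X}_{\varphi,\rho}}\|x^\delta_{k,m}-x\|$, is a \emph{random} quantity: the minimum over the tuning parameters is taken pointwise in the realized noise, with $x^\delta_{k,m}$ built from the actual data $y^\delta=Kx+\delta Z$. Classical minimax lower bounds control $\inf_{\hat{x}}\sup_{x}\E\|\hat{x}-x\|$ (infimum over all estimators, expectation over the noise); they do not yield a high-probability lower bound for a quantity in which the $\min_{m,k}$ sits inside the probability, and no Jensen/Fatou argument converts one into the other, since $\min$ and $\E$ only commute in the unfavourable direction. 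A priori, for a given noise draw the Landweber family could track the data unusually well and the minimum could fall below the rate; ruling this out requires concentration of the noise coordinates. This is precisely the content of the paper's Proposition \ref{prop}, which proves $\min_{k,m}\sup_{x^\dagger\in\mathcal{X}_{\varphi,\rho}}\|x^\delta_{k,m}-x^\dagger\|\,\omd\ \ge\ c\,\rho\,\varphi\bigl(\Theta^{-1}(\delta^2/\rho^2)\bigr)\,\omd$ on the event \eqref{proof:eq1}: one argues by contradiction, shows via the truncation bias and \eqref{prop:eq2} that any near-optimal $m$ must exceed $2m_*$, via the filter bias that any near-optimal $k$ must exceed $k_o$, uses the supremum over $x^\dagger$ to fix signs so that the noise--bias cross terms can be dropped (your ``bias $+$ variance'' lower-bound decomposition silently assumes no cancellation, which needs exactly this argument), and finally bounds the propagated-noise term from below using the concentration of $\sum_{j=m_*+1}^{2m_*}(y^\delta-y^\dagger,u_j)^2$ on $\Omega_{\delta/\rho}$. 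None of this work appears in your proposal, and without it the corollary does not follow from the upper bound.

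A secondary inaccuracy: the main result, Theorem \ref{th2}, is a rate bound $\mathbb{P}\bigl(\sup_{x^\dagger\in\mathcal{X}_{\varphi,\rho}}\|\xdp-x^\dagger\|\le L\rho\,\varphi(\Theta^{-1}(\delta^2/\rho^2))\bigr)\to 1$, not an oracle inequality of the form $\|\xdp-x^\dagger\|\le C\min_{m,k}(\mathrm{bias}+e_{\mathrm{st}})$ that you take as your starting point; assuming an oracle inequality whose essential difficulty is the same as the corollary's makes the reduction partly circular. Your upper-bound bookkeeping (qualification of the Landweber filter, summing the spectral series, balancing $k$ and $m$, verifying Assumption \ref{as1} for the H\"older and logarithmic cases) does match what the paper needs and is indeed routine; the missing piece is the high-probability lower bound on the random oracle quantity, which is the actual substance of the paper's proof of Corollary \ref{cor2}.
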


In the next section we present the general results and explain the motivation for Algorithm 1 in more detail. The proofs are gathered in Section \ref{sec:4} and in the final Section \ref{sec:5} some numerical experiments for a practical version of the method are presented.

\section{Main results}\label{sec:2}

In order to give a more detailed motivation for our approach we first recap the results from \cite{jahn2021optimal}. Let $(\sigma_j,v_j,u_j)_{j\in\N}$ be the singular value decomposition, i.e., there holds $\sigma_1\ge \sigma_2\ge ... >0$ and $(u_j)_{j\in\N}$ and $(v_j)_{j\in\N}$ are orthonormal bases of $\mathcal{Y}$ and $\mathcal{N}(K)^\perp$ respectively which fulfill the  relations $K v_j=\sigma_j u_j$ and $K^*u_j=\sigma_j v_j$ for all $j\in\N$ with $K^*$ the adjoint of $K$. Using the singular value decomposition we define an approximation of the unknown $x^\dagger$ via spectral cut-off regularisation with regularisation parameter (also called truncation level in this context) $k\in\N$

$$x^\delta_{\rm{ SC},k}:= \sum_{j=1}^k \frac{(y^\delta,u_j)}{\sigma_j} v_j.$$

The truncation level $k$ has to be chosen dependent on the measurement $y^\delta$ and the noise level $\delta$. In order to apply the plain discrepancy principle one would demand that the norm of the residual equals the bound of the norm of the measurement error, i.e., that $k$ is defined by the relation

\begin{equation}\label{int:eq0}
\sqrt{\sum_{j=k+1}^\infty (y^\delta,u_j)^2} = \| Kx^\delta_{\rm{ SC},k}-y^\delta\| \approx \|y^\delta-y^\dagger\| = \sqrt{\sum_{j=1}^\infty (y^\delta-y^\dagger,u_j)^2}.
\end{equation}

However under white noise there holds $\E\|y^\delta-y^\dagger\|^2 = \delta^2 \sum_{j=1}^\infty(Z,u_j)^2 = \infty$ and consequently the choice \eqref{int:eq0} clearly is infeasible, as already mentioned above. As above we cut the sum using an additional discretisation parameter $m$ and obtain $\E[\sum_{j=1}^m(y^\delta-y^\dagger,u_j)^2] = m\delta^2$. We replace \eqref{int:eq0} with

\begin{equation}\label{int:eq2a}
\sum_{j=k+1}^m(y^\delta,u_j)^2\approx m \delta^2.
\end{equation}

The 'solution' of \eqref{int:eq2a} now depends on the discretisation level $m$ and the main result of \cite{jahn2021optimal}  states that maximising over $m$ yields a truncation level providing optimal convergence against the unknown solution in probability; in form of an oracle inequality.
This result motivates to determine the stopping index $k$ for the discretised Landweber iteration \eqref{int0:eq1} as 

\begin{align}\label{int:pc}
\bar{k}_{dp}^\delta:=\max_{m\in\N} \kdp(m);
\end{align}

with $\kdp(m)$ from \eqref{int0:eq2} and the corresponding discretisation dimension

\begin{align}\label{int:pc1}
\bar{m}_{dp}^\delta:=\arg\max_{m\in\N} \kdp.
\end{align}

As said above, somewhat surprisingly however this approach does not yield convergence. 

\begin{theorem}\label{th1}
	Let $K$ be compact with dense range and $\sigma_1=\sigma_2<1$ and $Z$ be Gaussian white noise. Moreover, let $x^\dagger = v_1+v_2/2$ and $\tau\ge2$. Then for the choice \eqref{int:pc} and \eqref{int:pc1} there holds
	
	$$\limsup_{\delta\to0}\mathbb{P}\left(\|x^\delta_{\bar{k}^\delta_{\rm dp},\bar{m}^\delta_{\rm dp}} - x^\dagger\|>1/\sqrt{2}\right)\ge \frac{1}{48}$$
	
 In particular, $x^\delta_{\bar{k}^\delta_{\rm dp},\bar{m}^\delta_{\rm dp}}$ does not converge to $x^\dagger$.
\end{theorem}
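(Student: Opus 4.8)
The natural plan is to pass to the singular system of $K$ and make the whole procedure explicit, so that everything reduces to a one-dimensional analysis per coordinate. Writing $Z_j:=(Z,u_j)$, these are i.i.d.\ standard Gaussians because $Z$ is Gaussian white noise, and $(y^\delta,u_j)=(y^\dagger,u_j)+\delta Z_j$ with $(y^\dagger,u_1)=\sigma_1$, $(y^\dagger,u_2)=\sigma_1/2$ (using $\sigma_2=\sigma_1$) and $(y^\dagger,u_j)=0$ for $j\ge3$. Since $P_m$ projects onto $u_1,\dots,u_m$, the iteration decouples and I would record the closed forms
\[
x^\delta_{k,m}=\sum_{j=1}^m\frac{(y^\delta,u_j)}{\sigma_j}\bigl(1-(1-\sigma_j^2)^k\bigr)v_j,\qquad \bigl\|P_mKx^\delta_{k,m}-P_my^\delta\bigr\|^2=\sum_{j=1}^m(y^\delta,u_j)^2(1-\sigma_j^2)^{2k},
\]
so that $\kdp(m)$ is the first $k$ for which the right-hand sum drops below $\tau^2m\delta^2$. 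The structural point I would isolate is that $\sigma_1=\sigma_2$ forces the two signal coordinates to share the single contraction factor $q:=1-\sigma_1^2\in(0,1)$; thus for the relevant small dimensions the signal part of the residual is $\bigl((y^\delta,u_1)^2+(y^\delta,u_2)^2\bigr)q^{2k}$ and cannot be ``resolved'' by the discrepancy, while the weight $\tfrac52\sigma_1^2$ at $k=0$ and the threshold growth $\tau^2m\delta^2$ (with $\tau\ge2$, so $\tau^2-1\ge3$) are what the maximisation in \eqref{int:pc}, \eqref{int:pc1} ultimately weighs against the fluctuations of the noise coordinates $Z_3,Z_4,\dots$.

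The heart of the argument is to exhibit a noise event $B_\delta$ with $\liminf_\delta\PP(B_\delta)\ge 1/48$ on which the pair $(\bar m^\delta_{\rm dp},\bar k^\delta_{\rm dp})$ selected by maximising the stopping index is destabilising. The mechanism I would try to pin down is that, because the high-frequency residual terms $\delta^2Z_j^2(1-\sigma_j^2)^{2k}$ decay arbitrarily slowly in $k$ when $\sigma_j$ is small, a favourable cluster of coordinates $\{Z_j^2>\tau^2\}$ keeps the residual above the discrepancy level at a dimension $m\ge3$, so that $\kdp$ is maximised there and $\bar k=\kdp(\bar m)$ overshoots the index that would merely recover $v_1+\tfrac12v_2$. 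Concretely I would analyse the random walk $W_m=\sum_{j=3}^m\bigl(Z_j^2(1-\sigma_j^2)^{2\kdp(1)}-\tau^2\bigr)$ and show $\{\sup_{m\ge3}W_m>\tfrac34\tau^2\}\subset\{\bar m\ge3,\ \bar k>\kdp(1)\}$; the event $\{Z_3^2>\tfrac74\tau^2\}$ alone already sits inside this and, together with the next coordinates, is what I would bound below by $1/48$ using a Gaussian/chi-square tail estimate.

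Finally I would turn the selection of $(\bar m,\bar k)$ into a lower bound on the reconstruction error. Here the two signal coordinates, carrying the largest singular values, are fully captured once $\bar k\ge\kdp(1)$, so the error is governed by the propagated-noise (variance) term $\delta^2\sum_{j=3}^{\bar m}\sigma_j^{-2}Z_j^2(1-(1-\sigma_j^2)^{\bar k})^2$, and on $B_\delta$ I would show this exceeds $1/2$, whence $\|x^\delta_{\bar k,\bar m}-x^\dagger\|>1/\sqrt2$; the choice of the constants $1/\sqrt2$ and $1/48$ should come out of balancing exactly this variance term against the threshold on $B_\delta$. The step I expect to be the main obstacle is precisely this last quantitative control: showing that the overshoot in $\bar k$ inflates the variance term past $1/2$ \emph{uniformly over all admissible spectra} with $\sigma_1=\sigma_2<1$ — for rapidly decaying spectra a single amplified coordinate $\delta^2Z_j^2/\sigma_j^2$ tends to $0$ with $\delta$, so one must argue that the maximisation forces $\bar m$ and $\bar k$ to scale with $\delta$ in a way that keeps a genuinely $O(1)$ mass of amplified noise on a constant-probability event, and it is reconciling this with the mild $\sigma_1=\sigma_2$ hypothesis that I would need to handle most carefully.
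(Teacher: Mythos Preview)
Your proposal pursues the wrong failure mechanism. You try to force $\bar m^\delta_{\rm dp}\ge 3$ and then argue that the \emph{variance} (data propagation) term $\delta^2\sum_{j\ge 3}\sigma_j^{-2}Z_j^2(1-(1-\sigma_j^2)^{\bar k})^2$ stays bounded below by $1/2$. But the obstacle you yourself flag in the last paragraph is fatal: under the standard Landweber bound $\lambda^{-1}(1-(1-\lambda)^k)^2\le k$, this variance term is at most $\delta^2\bar k\sum_{j=3}^{\bar m}Z_j^2$, and the discrepancy relation combined with $(y^\dagger,u_j)=0$ for $j\ge 3$ only allows $\bar k$ to be of order $\log(1/\delta)/\sigma_1^2$ plus a term depending on how slowly the tail $\delta^2\sum_{j\ge 3}Z_j^2(1-\sigma_j^2)^{2k}$ decays. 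There is no way to make this blow up to order one uniformly in the spectrum $(\sigma_j)_{j\ge 3}$ on an event of probability bounded away from zero; for, say, polynomially decaying $\sigma_j$ the discrepancy principle with $\tau>1$ is perfectly well-behaved in high dimensions. So the ``overshoot'' route cannot deliver a spectrum-free lower bound on the error.

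The paper's mechanism is the opposite: the breakdown is \emph{fatal early stopping}, namely $\bar m^\delta_{\rm dp}=1$ with positive probability, so the reconstruction lies in ${\rm span}(v_1)$ and misses the $v_2$-component of $x^\dagger$ entirely, giving a bias of fixed size independent of $\delta$. The event is built from three independent pieces: (i) $Z_1$ slightly positive, so $(y^\delta,u_1)^2$ is a bit larger than $\sigma_1^2$; (ii) $Z_2$ slightly negative, so $(y^\delta,u_2)^2<\sigma_1^2/4$; (iii) the tail $\sum_{j=3}^m Z_j^2\le \tau^2(m-2)$ for all $m\ge 3$ (a backward-martingale/maximal inequality gives this with probability at least $1-\tfrac{1}{\tau^2-1}\E|Z_1^2-1|$). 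One then picks $k_\delta$ with $(1-\sigma_1^2)^{2k_\delta}\sigma_1^2\in[\tau^2\delta^2,\tfrac43\tau^2\delta^2]$ and checks directly that on this event the $m=1$ residual at $k_\delta$ still exceeds $\tau^2\delta^2$, whereas for every $m\ge 2$ the residual at $k_\delta$ is already $\le \tau^2 m\delta^2$ (the point of $\sigma_1=\sigma_2$ is that the second signal coordinate contracts at exactly the same rate, so adding it cannot keep the residual up; the tail is handled by (iii)). Hence $\kdp(1)>k_\delta\ge\kdp(m)$ for all $m\ge2$, forcing $\bar m^\delta_{\rm dp}=1$. The product of the three probabilities gives the $1/48$. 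Your first two displayed identities are correct and are exactly what this computation uses; it is the direction of the argument from there on that needs to be reversed.
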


In the proof of Theorem \ref{th1} it becomes clear that the breakdown of the convergence is due to fatal early stopping. To overcome this early stopping we suggest the following adaptive choice: Set $m^0:=1$ and recursively

\begin{align*}
m^{n}:=\min\left\{ m \ge m^{n-1}~:~ k(m) \ge \max_{m'\ge m^{n-1}} k(m')\right\}.
\end{align*}

As a stopping rule of the recursion we then use

\begin{align}\label{int:eq2}
n_*:=& \min\left\{n\in\N~:~\frac{\|Kx^\delta_{2\kdp(m^n),m^n} - P_{m^n}y^\delta\|}{\|Kx^\delta_{\kdp(m^n),m^n} - P_{m^n}y^\delta\|} \ge \frac{1}{2\tau e^4}\right\},
\end{align}

and our final choice for stopping index and discretisation level becomes

\begin{align*}
m^\delta_{dp}:=&m^{n_*}\\\notag
k^\delta_{dp}:=&k_{dp}^\delta(m^\delta_{dp}),
\end{align*}

see Algorithm 1 for the implementation. The above choice still has the same spirit - maximising the stopping index. However it is taken care of that also the discretisation dimension is sufficiently large, for what the expression inside the curled brackets in \eqref{int:eq2} can be seen as a test for, see \eqref{th2:eq00} in the proof of Theorem \ref{th2} below. We remark that the threshold $\frac{1}{2\tau e^4}$ is not optimal and usually can be replaced with $\frac{1-\varepsilon}{\tau}$, see Corollary \ref{cor3} below.

 In order to formulate the main result we start by giving the rigorous definition of the source conditions. A function $\varphi:[0,\|K\|^2]\to\R$ is called an index function, if it is continuous monotonically increasing and fulfills $\varphi(0)=0$. A source condition (see Hofmann and Math\'{e} \cite{hofmann2007analysis}) is then defined as

\begin{equation}\label{sec3:eq1}
\mathcal{X}_{\varphi,\rho}:=\left\{x=\varphi(K^*K)\xi = \sum_{j=1}^\infty \varphi(\sigma_j^2)(\xi,v_j)v_j~|~\xi\in\mathcal{X},~\|\xi\|\le \rho\right\}.
\end{equation}

The convergence rate under a source condition will be expressed using an  auxiliary function. We first define for $ x\in[1,\infty)$ and $\alpha\in(0,\|K\|^2]$

\begin{align*}
\sigma_x^2:&=\sigma_{\lfloor x \rfloor}^2(\lfloor x \rfloor +1-x)\sigma_{\lfloor x\rfloor +1}^2(x-\lfloor x \rfloor),\\
x_{\alpha}:&=\min\{ x \ge 1 ~:~ \sigma_x^2 = \alpha\}.
\end{align*}

Note that $\sigma_{x_{\alpha}}^2 = \alpha$ and set

\begin{equation}
\Theta:(0,\|K\|^2),\alpha\mapsto \frac{\alpha \varphi^2(\alpha)}{x_\alpha}.
\end{equation}

Obviously, $\Theta$ is continuous and strictly monotonically increasing with $\lim_{\alpha\to 0}\Theta(\alpha)=0$. Therefore it is invertible on $(0,\infty)$. 

\begin{remark}\label{rem1}
	We give a short motivation of the function $\Theta$ and its relation to optimal rates. One can show that there exists $c>0$ such that
	
	\begin{align*}
	&\min_{k,m\in\N}\sup_{\xi\in\mathcal{X}_{\varphi,\rho}}\E\|x_{k,m}^\delta-x^\dagger\|^2\\
	= &\min_{k,m\in\N}\sup_{\xi\in\mathcal{X}_{\varphi,\rho}}\left\{\delta^2 \sum_{j=1}^m\frac{(1-(1-\sigma_j^2)^k)^2}{\sigma_j^2} + \sum_{j=1}^m(1-\sigma_j^2)^{2k}\varphi^2(\sigma_j^2)(\xi,v_j)^2\right.\\
	&\qquad\qquad\qquad\qquad \left. + \sum_{j=m+1}^\infty\varphi^2(\sigma_j^2)(\xi,v_j)^2\right\}\\
	\ge &c \min_{l\in\N} \left\{\delta^2 \sum_{j=1}^l\frac{1}{\sigma_j^2} + \varphi^2(\sigma_{l+1}^2)\rho^2\right\}\\
	&\ge \frac{c}{2}\min\left(\delta^2 \sum_{j=1}^{l_*} \frac{1}{\sigma_j^2} + \varphi^2(\sigma_{l_*+1}^2) \rho^2,\delta^2 \sum_{j=1}^{l_*-1} \frac{1}{\sigma_j^2} + \varphi^2(\sigma_{l_*}^2) \rho^2\right),
	\end{align*}
	
	where $l_*:=\min\left\{l\in\N~:~ \delta^2\sum_{j=1}^l\frac{1}{\sigma_j^2} \ge \varphi^2(\sigma_{l+1}^2)\rho^2\right\}$ is the index balancing the both terms in the sum. Now there holds

	$$\delta^2 \frac{l}{\sigma_l^2} \ge \delta^2\sum_{j=1}^{l}\frac{1}{\sigma_j^2}\quad \mbox{and} \quad \varphi^2(\sigma_{l+1}^2)\rho^2\le \varphi^2(\sigma_l^2)\rho^2.$$
	
	Set $\alpha_*=\Theta^{-1}\left(\frac{\delta^2}{\rho^2}\right)$. By definition it then holds that
	
	\begin{equation}\label{rem1:eq1}
        	\varphi^2(\sigma_{x_{\alpha_*}}^2)\rho^2 = \varphi^2(\alpha_*)\rho^2 = \Theta(\alpha_*) \frac{x_{\alpha_*}}{\alpha_*}\rho^2  = \frac{x_{\alpha_*}}{\sigma_{x_{\alpha_*}}^2}\delta^2,
	\end{equation}

	i.e., $x_{\alpha_*}$ is an approximation for the balancing index $l_*$ and $\varphi^2\left(\Theta^{-1}\left(\frac{\delta^2}{\rho^2}\right)\right)\rho^2$ is an approximation for the optimal convergence rate.
\end{remark}

Moreover we need the notation of a qualification of a regularisation method. An index function $\varphi$ is a qualification of Landweber iteration if

\begin{equation}\label{sec3:eq00}
\sup_{0<\lambda\le \|K\|^2}(1-\lambda)^{k}\varphi(\lambda)\le \varphi\left(\frac{1}{k}\right).
\end{equation}

We will formulate our main result for all source conditions with the following property.

\begin{assumption}\label{as1}
	$\varphi$ is an index function such that $\varphi^2$ is invertible and the function $\phi:[0,\varphi(\|K\|^2)^2]\to\R,~\lambda\mapsto \lambda \cdot \left(\varphi \cdot \varphi\right)^{-1}(\lambda)$ is convex. Moreover, $\varphi$ and the function $\lambda\mapsto \sqrt{\lambda}\varphi(\lambda)$ are qualifications of the Landweber method in the sense of \eqref{sec3:eq00}.
\end{assumption}

Note that actually Assumption \ref{as1} is a standard assumption for convergence analysis and no real restriction, since for any $x^\dagger$ there exists an index function $\varphi$ and $\rho>0$ with $x^\dagger\in\mathcal{X}_{\varphi,\rho}$ fulfilling the requirements of the assumption.  Popular concrete source conditions are H\"older source conditions 

\begin{equation}\label{sec3:eq2}
\varphi(t)=t^\frac{\nu}{2}
\end{equation}

and logarithmic source conditions

\begin{equation}\label{sec3:eq3}
\varphi(t)=\left(-\log(t)\right)^{-\frac{p}{2}},\end{equation}

which are naturally considered with polynomially or exponentially ill-posed problems respectively, i.e.,

\begin{equation}\label{sec3:eq4}
\sigma_j^2 \asymp j^{-q}
\end{equation}

and 

\begin{equation}\label{sec3:eq5}
\sigma_j^2 \asymp \exp(-aj),
\end{equation}

where $\nu,p,a,q>0$. It is easy to check that \eqref{sec3:eq2} and \eqref{sec3:eq3} fulfill Assumption \ref{as1}. We now formulate our main result about the rate of convergence for general $K$  under general source conditions.

\begin{theorem}\label{th2}
	Let $K$ be compact with dense range, $\|K\|\le 1$ and assume that $\varphi$ fulfills Assumption \ref{as1}. Then there exists $L>0$ such that for $\xdp$ determined by Algorithm 1  there holds
	
	$$\mathbb{P}\left(\sup_{x^\dagger\in\mathcal{X}_{\varphi,\rho}}\|\xdp-x^\dagger\|\le L  \rho \varphi\left(\Theta^{-1}\left(\frac{\delta^2}{\rho^2}\right)\right)\right)\to 1 $$
	
	as $\delta/\rho\to 0$.
\end{theorem}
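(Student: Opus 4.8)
\emph{Proof strategy.} The plan is to combine the classical bias--variance decomposition of a filter-based regulariser with a careful analysis of the three selection mechanisms in Algorithm 1 (discrepancy stopping, maximisation over $m$, and the \texttt{while}-loop ratio test). Because $P_m$ projects onto the first $m$ left singular vectors, the iterate \eqref{int0:eq1} diagonalises, so that $x^\delta_{k,m}=\sum_{j=1}^m\frac{1-(1-\sigma_j^2)^k}{\sigma_j}(y^\delta,u_j)v_j$ and, using $(y^\delta,u_j)=\sigma_j(x^\dagger,v_j)+\delta(Z,u_j)$,
$$\|x^\delta_{k,m}-x^\dagger\|^2=\sum_{j>m}(x^\dagger,v_j)^2+\sum_{j\le m}(1-\sigma_j^2)^{2k}(x^\dagger,v_j)^2+\delta^2\sum_{j\le m}\frac{(1-(1-\sigma_j^2)^k)^2}{\sigma_j^2}(Z,u_j)^2+C_{k,m},$$
where the first two sums form the deterministic approximation error, the third is the propagated noise, and $C_{k,m}$ collects the mean-zero cross terms. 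I would first isolate a noise-only event $E_\delta$, depending on $Z,\delta,\rho$ and $K$ but \emph{not} on $x^\dagger$, on which all stochastic quantities are controlled; uniformity of the final bound over $\mathcal{X}_{\varphi,\rho}$ then follows because the deterministic estimates depend on $x^\dagger$ only through $\|\xi\|\le\rho$.

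The deterministic part I would bound via Assumption \ref{as1}: under $(x^\dagger,v_j)=\varphi(\sigma_j^2)(\xi,v_j)$ the qualification property \eqref{sec3:eq00} gives $\sup_\lambda(1-\lambda)^{2k}\varphi^2(\lambda)\le\varphi^2(1/k)$, so the Landweber bias is at most $\rho^2\varphi^2(1/\kdp)$, while monotonicity of $\varphi$ bounds the tail bias $\sum_{j>\mdp}\varphi^2(\sigma_j^2)(\xi,v_j)^2$ by $\rho^2\varphi^2(\sigma_{\mdp+1}^2)$. The benchmark rate is identified in Remark \ref{rem1}: with $\alpha_*=\Theta^{-1}(\delta^2/\rho^2)$ one has $\rho^2\varphi^2(\alpha_*)\asymp\delta^2 x_{\alpha_*}/\alpha_*$ by \eqref{rem1:eq1}, and $x_{\alpha_*}$ plays the role of the balancing index $l_*$. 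Hence it suffices to show that Algorithm 1 produces, with probability tending to one, a pair with $1/\kdp\asymp\alpha_*$ and $\sigma_{\mdp}^2\lesssim\alpha_*$, for then each of the three terms is $O(\rho^2\varphi^2(\alpha_*))$, using $\delta^2\sum_{j\le\mdp}\sigma_j^{-2}(Z,u_j)^2\asymp\delta^2 x_{\alpha_*}/\alpha_*$ on $E_\delta$.

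The core is the analysis of Algorithm 1. The discrepancy principle \eqref{int0:eq2} fixes $\|R_{\kdp(m)}\|\le\tau\sqrt m\delta<\|R_{\kdp(m)-1}\|$, where $\|R_k\|^2=\sum_{j\le m}(1-\sigma_j^2)^{2k}(y^\delta,u_j)^2$; on $E_\delta$ this pins $\kdp(m)$ to the correct order once $m$ is adequate. The maximisation step defining $m^n$ drives the stopping index upward, suppressing the Landweber bias, while the ratio $I_n=\|R_{2\kdp(m^n)}\|/\|R_{\kdp(m^n)}\|$ certifies that the discretisation is large enough: I would show that if $\sigma_{m^n}^2$ is still large relative to $\alpha_*$ the residual is dominated by unfitted signal and doubling the iteration count decreases it sharply, forcing $I_n<\frac{1}{2\tau e^4}$ (loop continues), whereas once $\sigma_{m^n}^2\lesssim\alpha_*$ the residual has reached the noise floor $\asymp\sqrt m\delta$ and $I_n$ exceeds the threshold (loop stops). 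Here the factor $e^4$ enters through the elementary bound $(1-\sigma_j^2)^{2k}\ge e^{-4}$ when $k\sigma_j^2\le 2$, yielding the lower estimate on $I_n$ advertised as \eqref{th2:eq00}. This is precisely the mechanism that rules out the fatal early stopping of Theorem \ref{th1}.

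The remaining work is probabilistic: controlling $\sum_{j\le m}(Z,u_j)^2$ and its weighted and cross-term analogues \emph{uniformly} over the range of discretisation dimensions visited by the recursion, via $\chi^2$-type concentration (which sharpens as $\alpha_*\to0$, hence $x_{\alpha_*}\to\infty$, in the limit $\delta/\rho\to0$). I expect the main obstacle to be exactly the rigorous treatment of the \texttt{while}-loop: one must show it terminates neither too early (insufficient modes, the failure mode of Theorem \ref{th1}) nor too late (inflated variance), and that the termination dimension obeys $\sigma_{\mdp}^2\asymp\alpha_*$ with high probability. This couples the random stopping indices $\kdp(m)$ across many values of $m$ simultaneously and therefore demands concentration estimates that are uniform in the discretisation dimension, rather than for a single fixed $m$. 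Once $E_\delta$ is established with $\PP(E_\delta)\to1$, combining the three $O(\rho^2\varphi^2(\alpha_*))$ contributions yields the claimed bound $\|\xdp-x^\dagger\|\le L\rho\varphi(\Theta^{-1}(\delta^2/\rho^2))$ on $E_\delta$, uniformly over $\mathcal{X}_{\varphi,\rho}$.
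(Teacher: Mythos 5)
Your outline reproduces the paper's skeleton (a noise-only concentration event uniform in $m$, a three-term error decomposition, control of $\kdp$ and $\mdp$ through the discrepancy test and the \texttt{while}-loop), but the central reduction contains a genuine gap: you claim it suffices to prove that, with high probability, $1/\kdp\asymp\alpha_*$ and $\sigma_{\mdp}^2\lesssim\alpha_*$. These statements are false uniformly over $\mathcal{X}_{\varphi,\rho}$, and the theorem demands a bound valid for every $x^\dagger$ in the class. Take $x^\dagger=0$ (or any solution much smoother than the worst case): then $y^\dagger=0$ and, on the concentration event, the residual satisfies $\sqrt{\sum_{j\le m}(y^\delta,u_j)^2}\le\frac{\tau+1}{2}\sqrt{m}\delta<\tau\sqrt{m}\delta$ already after one iteration, so $\kdp(m)$ stays bounded as $\delta\to0$, the ratio test passes immediately, and $\sigma_{\mdp}^2$ remains bounded away from zero while $\alpha_*\to0$. (This also shows that your heuristic ``loop continues as long as $\sigma_{m^n}^2\gg\alpha_*$'' is not provable; the paper's Lemma \ref{lem1} only establishes the converse direction, that the loop must have terminated once $m^n\ge C m_*$.) The algorithm is right to stop early for such $x^\dagger$ — the bias truly is zero — but your bias bounds $\rho^2\varphi^2(1/\kdp)$ and $\rho^2\varphi^2(\sigma_{\mdp+1}^2)$ then explode, and no lower bound on $\kdp$ or $\mdp$ of the kind you want exists. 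The paper never lower-bounds these quantities. Instead it bounds the retained bias \emph{through the residual} via the conditional stability (interpolation) estimate of \cite{hohage2000regularization}, used in \eqref{th2:eq2}: $\sum_{j\le\mdp}(1-\sigma_j^2)^{2\kdp}(x^\dagger,v_j)^2\le\rho^2\phi^{-1}\bigl(\sum_{j\le\mdp}(y^\dagger,u_j)^2/\rho^2\bigr)$, where the discrepancy stopping rule and the noise event give $\sum_{j\le\mdp}(y^\dagger,u_j)^2\lesssim\mdp\delta^2\lesssim m_*\delta^2$, whence $\phi^{-1}(\alpha_*\varphi^2(\alpha_*))=\varphi^2(\alpha_*)$ regardless of whether $\kdp$ is large or small. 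This is precisely where the convexity of $\phi$ in Assumption \ref{as1} enters — a hypothesis your proposal never invokes, which signals the missing mechanism. The residual tail $\sum_{\mdp<j\le m_*}(x^\dagger,v_j)^2$ (when $\mdp<m_*$) is handled by the same device, combining $\kdp\ge\kdp(m_*)$ (from the maximisation step) with the \texttt{while}-loop consequence $\kdp\le(4+\log(2\tau))/\sigma_{\mdp}^2$, i.e.\ \eqref{th2:eq1}.

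A second, related gap is the variance term: you replace the filter by $\sigma_j^{-2}$ and assert $\delta^2\sum_{j\le\mdp}\sigma_j^{-2}(Z,u_j)^2\asymp\delta^2x_{\alpha_*}/\alpha_*$. The only provable control on the discretisation dimension is $\mdp\le B_\tau m_*$ (Lemma \ref{lem1}), a constant \emph{multiple} of $m_*$, and under exponential ill-posedness $\sigma_j^2\asymp e^{-aj}$ one gets $\sum_{j\le B_\tau m_*}\sigma_j^{-2}\asymp e^{aB_\tau m_*}$, a \emph{power} of $1/\alpha_*$ rather than a constant times $x_{\alpha_*}/\alpha_*$, so your bound fails. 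The paper avoids this by using $(1-(1-\lambda)^k)^2/\lambda\le k$ (estimate \eqref{pr:eq00}) to bound the propagated noise by $\frac{\tau+1}{2}\sqrt{\mdp\kdp}\,\delta$, and then distinguishing the cases $\kdp-1\ge\alpha_*^{-1}$ (where the discrepancy relation together with the qualification of $\lambda\mapsto\sqrt{\lambda}\varphi(\lambda)$ controls $\sqrt{\mdp\kdp}\,\delta$) and $\kdp-1<\alpha_*^{-1}$ (where Lemma \ref{lem1} suffices). These two repairs — the interpolation inequality in place of a lower bound on $\kdp$, and the case distinction in place of a two-sided localisation of the stopping index — constitute the actual substance of the proof that your outline does not reach.
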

 Note that contrarily to the setting in Corollary \ref{cor2} it does not hold that the rate is minimax, due to the generality of the operator $K$.

In some cases, e.g., for exponentially ill-posed problems under logarithmic source condition one would like to guarantee asymptotically optimal convergence, e.g., $L=1+o(1)$. We just mention that the approach presented here will not yield this type of convergence, another modification would be required to achieve this.

\subsection{Additional remarks}
The first corollary shows that $\frac{1}{2e^4\tau}$ in Algorithm 1 $\eta$ can be replaced with  $(1-\varepsilon)/\tau$ without deteriorating the convergence rate asymptotically in the setting of Corollary \ref{cor2}.

\begin{corollary}\label{cor3}
	Assume that  $K$ is either polynomially ill-posed or exponentially ill-posed (cf. \eqref{sec3:eq4} and \eqref{sec3:eq5}). Then, under H\"older \eqref{sec3:eq2} or logarithmic \eqref{sec3:eq3} source conditions respectively, for any $\varepsilon>0$ there exists $C_\tau'>0$  such that
	
	$$\sup_{x^\dagger\in\mathcal{X}_{\varphi,\rho}}\mathbb{P}\left(\|\xdp-x^\dagger\|\le C_\tau' \min_{m,k\in\N}\sup_{x\in\mathcal{X}_{\varphi,\rho}}\|x^\delta_{k,m}-x\|\right)\to 1$$
	
	as $\delta/\rho\to0$ where $x^\dagger_{\kdp,\mdp}$ is determined with Algorithm 1 with $\frac{1}{2e^4\tau}$ replaced by $(1-\varepsilon)/\tau$.
\end{corollary}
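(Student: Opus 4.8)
The plan is to reduce Corollary \ref{cor3} to the already-proven rate in Theorem \ref{th2} by a careful tracking of how the threshold $\eta$ enters the analysis. The key observation is that Theorem \ref{th2} is stated for the specific threshold $\frac{1}{2\tau e^4}$, but its proof (via the test \eqref{th2:eq00}) only uses that $\eta$ is a fixed constant strictly below $1/\tau$; the aggressive constant $e^{-4}$ is merely a convenient margin. First I would re-examine the stopping criterion \eqref{int:eq2}: the while-loop terminates once the residual ratio $I_n$ crosses $\eta$. Replacing $\eta=\frac{1}{2\tau e^4}$ by $\eta=(1-\varepsilon)/\tau$ only changes the level at which the loop stops, hence only affects how large the finally selected $m^\delta_{dp}$ becomes. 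The larger $\eta$ is, the earlier the loop may terminate, so I must verify that the chosen discretisation is still large enough to guarantee that the discrepancy-principle stopping index $\kdp(m^\delta_{dp})$ lands in the optimal range.

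Second, I would specialise to the two concrete regimes \eqref{sec3:eq4}–\eqref{sec3:eq5} with source conditions \eqref{sec3:eq2}–\eqref{sec3:eq3}. In these cases the spectrum is regularly varying (polynomially or exponentially), so the quantities $x_\alpha$, $\Theta(\alpha)$ and the balancing index $l_*$ from Remark \ref{rem1} are all comparable up to constants. This regularity is exactly what upgrades the \emph{general} rate $\rho\varphi(\Theta^{-1}(\delta^2/\rho^2))$ of Theorem \ref{th2} to the \emph{minimax} rate $\min_{m,k}\sup_{x}\|x^\delta_{k,m}-x\|$, as already exploited in Corollary \ref{cor2}. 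The main task is therefore to show that enlarging $\eta$ to $(1-\varepsilon)/\tau$ does not push $m^\delta_{dp}$ below the critical dimension $\sim x_{\alpha_*}$: I would argue that the residual ratio $I_n$ stays well below $(1-\varepsilon)/\tau$ until the discretisation reaches the neighbourhood of the oracle dimension, because for $m$ substantially smaller than $x_{\alpha_*}$ the doubled iteration $2\kdp(m)$ still has a residual that is a definite factor smaller than at $\kdp(m)$, forcing the loop to continue.

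The quantitative heart of the argument is a bound on $I_n$ of the form $I_n\le \frac{\|(I-P_{m^n}KK^*P_{m^n})^{\kdp(m^n)} P_{m^n}y^\delta\|}{\tau\sqrt{m^n}\delta}$ combined with the defining inequality of the discrepancy principle. Using the explicit Landweber filter and the fact that doubling $k$ roughly squares the residual contraction on the active frequencies, one shows that $I_n$ crosses $(1-\varepsilon)/\tau$ only once $m^n$ has reached the regime where the approximation and noise terms balance — precisely $m^n\asymp x_{\alpha_*}$. Here the convexity assumption and the qualification conditions in Assumption \ref{as1} are what let me compare the filtered residual with $\varphi(\Theta^{-1}(\delta^2/\rho^2))$ uniformly over the source set, yielding the new constant $C_\tau'$ depending on $\varepsilon$ and $\tau$ but not on $x^\dagger$.

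The hard part will be the second step: controlling the residual ratio $I_n$ sufficiently sharply so that the larger threshold still guarantees $m^\delta_{dp}\gtrsim x_{\alpha_*}$. This requires a two-sided estimate on the discretised Landweber residual that is uniform over $\mathcal{X}_{\varphi,\rho}$ and robust to the white-noise fluctuations, and it is exactly where the polynomial or exponential regularity of the singular values is indispensable — for a general operator (as in Theorem \ref{th2}) the conservative threshold $e^{-4}$ cannot be relaxed, which is why Corollary \ref{cor3} is restricted to the two structured regimes. I expect that once the uniform residual bound is in place, the passage from the general rate to the minimax rate follows verbatim from the computations already carried out for Corollary \ref{cor2}.
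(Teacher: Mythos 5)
Your overall reduction strategy (re-establish Lemma \ref{lem1} at the new threshold, then rerun the proof of Theorem \ref{th2} and invoke the minimax comparison of Proposition \ref{prop}, as in Corollary \ref{cor2}) is the right skeleton, and you correctly identify that the polynomial/exponential structure of the spectrum is what permits the larger threshold. But you have the monotonicity of the \texttt{while}-loop backwards, and this sends your "main task" in the wrong direction. The loop in Algorithm 1 runs \emph{while} $I_n<\eta$ and stops at the first $n$ with $I_n\ge\eta$ (cf. \eqref{int:eq2}); since the $m^n$ are nondecreasing, enlarging $\eta$ from $\frac{1}{2\tau e^4}$ to $(1-\varepsilon)/\tau$ makes termination \emph{harder}, so the loop runs \emph{longer} and $\mdp$ becomes \emph{larger}, not smaller. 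Consequently the danger is not that $\mdp$ falls below the critical dimension $x_{\alpha_*}$ — in fact no lower bound on $\mdp$ is used anywhere in the proof of Theorem \ref{th2}: the discretisation error is handled by the maximisation property ($\kdp\ge\kdp(m_*)$ whenever $\mdp<m_*$) together with the termination bound \eqref{th2:eq1}, and the latter only \emph{improves} when the threshold grows. The genuine issue is the upper bound of Lemma \ref{lem1}: one must show that the ratio exceeds the new, much larger threshold $(1-\varepsilon)/\tau$ once $m\ge C(\varepsilon)m_*$; otherwise the loop need not terminate at scale $m_*$ (or at all), and then the data propagation term $\sqrt{\mdp\kdp}\,\delta$ is uncontrolled.

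Your "key observation" that the proof of Theorem \ref{th2} only uses that $\eta$ is a fixed constant strictly below $1/\tau$ is also false, for the same reason: claim \eqref{lem1:eq2} delivers precisely the constant $\frac{1}{2\tau e^4}$ for a general operator, because via \eqref{lem1:eq3} all one can guarantee is $\sigma^2_{m_*+1}\lesssim 1/\kdp(m)$, hence a filter factor $(1-\sigma^2_{m_*+1})^{2\kdp(m)}$ only of order $e^{-4}$; this loss is structural, not a "convenient margin". The paper's proof of Corollary \ref{cor3} repairs exactly this point by exploiting the explicit spectra: under \eqref{sec3:eq4} or \eqref{sec3:eq5} one can choose $C_1(\varepsilon)$ so large that $\sigma^2_{C_1m_*+1}/\sigma^2_{m_*}$ is small, whence $(1-\sigma^2_{m'+1})^{2\kdp(m)}\ge\sqrt{1-\varepsilon}$ for $m'=C_1m_*$, and it sharpens the noise-concentration event (replacing the constants $9/10$, $11/10$ by $1\mp\varepsilon'$) so that the ratio exceeds $(1-\varepsilon)/\tau$ for all $m\ge C(\varepsilon)m_*$ with probability tending to one. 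With this strengthened version of \eqref{lem1:eq2}, Lemma \ref{lem1} holds with an $\varepsilon$-dependent constant, the rest of the proof of Theorem \ref{th2} goes through unchanged, and Proposition \ref{prop} converts the resulting rate into the minimax form. So your plan needs to be reoriented: prove a termination (upper) bound on $\mdp$ at the new threshold, not a lower bound.
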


We quickly discuss extensions to other regularisation method, e.g., the Tikhonov regularisation. The Tikhonov regularisation is defined as the solution of the following minimisation problem

\begin{equation}\label{subsec:eq1}
\min_{x\in\mathcal{X}}\|Kx-y^\delta\|^2 + \alpha\|x\|^2,
\end{equation}

where the regularisation parameter $\alpha>0$ balances the both terms and its role is comparable to that of $k^{-1}$ for Landweber iteration thus a smaller $\alpha$ yields a stronger regularisation. As before, in order to obtain well-defined expressions we have to discretise. It is not hard to show that the solution of the discretised version of \eqref{subsec:eq1} is 

\begin{equation}
x_{\alpha,m}^\delta:=\left((P_mK)^*P_mK+\alpha {\rm Id}\right)^{-1}(P_mK)^* P_m y^\delta=\sum_{j=1}^m\frac{\sigma_j}{\alpha+\sigma_j^2}(y^\delta,u_j)v_j,
\end{equation}

under the discretisation scheme \eqref{int:eq3c}, where ${\rm Id}$ denotes the identity on $\mathcal{X}$. The discrepancy principle for Tikhonov regularisation is typically implemented as

$$\alpha^\delta_{\rm dp}(m):=q^j\quad\mbox{with}\quad j=\min\left\{ i\ge 0~:~ \|P_m K x^\delta_{q^i,m} - P_my^\delta\|\le \tau \sqrt{m}\delta\right\},$$

with $q\in(0,1)$ fixed. Most of the results carry over to that case, with the obvious modification that the general source condition $\varphi$ and $\lambda\mapsto \sqrt{\lambda} \varphi(\lambda)$ have to be qualifications of Tikhonov regularisation, where an index function $\varphi$ is called qualification of Tikhonov's regularisation, if

\begin{equation}\label{subsec:eq1a}
\sup_{0<\lambda\le \|K\|^2}\frac{\alpha}{\alpha+\lambda}\varphi(\lambda)\le \varphi\left(\alpha\right).
\end{equation}

 The most interesting part is to modify the \texttt{while}-loop in Algorithm 1, which tests, whether the discretisation dimension is large enough. There are different approaches to do so and we will present one option here in Algorithm 2, and additional assume that the source condition $\varphi$ is concave. Note that this is not a serious restriction, see the proof of Corollary \ref{cor1} below. 

 \begin{algorithm}\label{algorithm2}
	\caption{Modified discrepancy principle for Tikhonov regularisation}
	\begin{algorithmic}[1]
		\STATE Given Tikhonov reconstructions $x_{\alpha^\delta_{\rm dp}(m),m}^\delta$ and $x_{t\alpha^\delta_{\rm dp}(m),m}^\delta$ with $\alpha^\delta_{\rm dp}(m)$ determined by the discrepancy principle , $m\in\N$, $t=1/8\tau$;
		\STATE \textit{Initialisation}
		\STATE Set $n=0$ and $m^{n}=1$ and $I^n:=0$;
		\STATE \textit{Calculate ratio}
		\WHILE{ $I_n<\frac{1}{8\tau}$}
		\STATE $n=n+1$;
		\STATE $m^n:=\min\left\{m\ge m^{n-1}~:~\alpha^\delta_{\rm dp}(m)\le\min_{m'\ge m^{n-1}}\alpha^\delta_{\rm dp}(m)\right\}$;
		\STATE $I_n:=\frac{\left\|P_mKx^\delta_{t\alpha^\delta_{\rm dp}(m),m^n}-P_my^\delta\right\|}{\left\|P_mKx^\delta_{\alpha^\delta_{\rm dp}(m),m^n}-P_my^\delta\right\|}$;
		\ENDWHILE
		\STATE $x^\delta_{\alpha^\delta_{\rm dp},\mdp}:=x^\delta_{\alpha^\delta_{\rm dp}(m^n),m^n}$;
	\end{algorithmic}
\end{algorithm}

The result is similar to Theorem \ref{th2}.

\begin{corollary}\label{cor4}
	Let $K$ be compact with dense range and	assume that $\varphi$ fulfills Assumption \ref{as1} and $\varphi$ and $\lambda\mapsto \sqrt{\lambda} \varphi(\lambda)$ are qualification of Tikhonov regularisation in the sense of \eqref{subsec:eq1a}. Then there exists $L'>0$ such that for $x^\delta_{\alpha^\delta_{\rm dp},\mdp}$ determined by Algorithm 2 there holds
	
	$$\mathbb{P}\left(\sup_{x^\dagger\in\mathcal{X}_{\varphi,\rho}}\|x^\delta_{\alpha^\delta_{\rm dp},\mdp}-x^\dagger\|\le L'  \rho \varphi\left(\Theta^{-1}\left(\frac{\delta^2}{\rho^2}\right)\right)\right)\to 1 $$
	
	as $\delta/\rho\to 0$.
\end{corollary}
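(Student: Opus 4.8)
The plan is to reproduce the proof of Theorem \ref{th2} essentially line by line, replacing the Landweber residual filter $(1-\lambda)^k$ and the associated bias filter by their Tikhonov counterparts $\lambda\mapsto\alpha/(\alpha+\lambda)$. On the discretised problem the Tikhonov reconstruction obeys the bias--variance decomposition
\begin{align*}
\E\|x^\delta_{\alpha,m}-x^\dagger\|^2 &= \delta^2\sum_{j=1}^m\frac{\sigma_j^2}{(\alpha+\sigma_j^2)^2} + \sum_{j=1}^m\frac{\alpha^2}{(\alpha+\sigma_j^2)^2}\varphi^2(\sigma_j^2)(\xi,v_j)^2\\
&\quad + \sum_{j=m+1}^\infty\varphi^2(\sigma_j^2)(\xi,v_j)^2,
\end{align*}
while the residual entering the discrepancy principle is
\begin{align*}
\|P_mKx^\delta_{\alpha,m}-P_my^\delta\|^2 = \sum_{j=1}^m\frac{\alpha^2}{(\alpha+\sigma_j^2)^2}(y^\delta,u_j)^2.
\end{align*}
The qualification hypothesis \eqref{subsec:eq1a} for $\varphi$ is exactly what bounds the bias contribution $\sum_{j=1}^m\frac{\alpha^2}{(\alpha+\sigma_j^2)^2}\varphi^2(\sigma_j^2)(\xi,v_j)^2$ by $\rho^2\varphi^2(\alpha)$ under the source condition \eqref{sec3:eq1}, while the qualification for $\lambda\mapsto\sqrt\lambda\,\varphi(\lambda)$ controls the signal part of the residual. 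With these replacements the deterministic estimates of Theorem \ref{th2} transfer verbatim, with $\alpha$ now playing the role formerly held by $k^{-1}$.

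First I would transcribe the behaviour of the discrepancy principle. Since $\alpha\mapsto\|P_mKx^\delta_{\alpha,m}-P_my^\delta\|$ is monotone, the parameter $\alpha^\delta_{\rm dp}(m)$ is the geometric grid point at which the residual first drops below $\tau\sqrt m\,\delta$, and \emph{minimising} $\alpha^\delta_{\rm dp}(m)$ over $m$ --- the selection of $m^n$ in Algorithm 2 --- plays exactly the role that \emph{maximising} $\kdp(m)$ plays in Algorithm 1, since small $\alpha$ corresponds to large $k$. The oracle argument of Remark \ref{rem1} together with the definition of $\Theta$ then pins down $\Theta^{-1}(\delta^2/\rho^2)$ as the correct order of the regularisation parameter and $\rho\,\varphi(\Theta^{-1}(\delta^2/\rho^2))$ as the target rate; this part is purely spectral and does not depend on the chosen filter. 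The stochastic fluctuations of the white-noise coefficients $(y^\delta-y^\dagger,u_j)^2$ about their mean $\delta^2$ are handled by the same concentration estimates as in Theorem \ref{th2}, which hold uniformly in $m$ and are insensitive to the particular regularisation.

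The crux is the analysis of the \texttt{while}-loop, that is, the Tikhonov analogue of the estimate \eqref{th2:eq00}. Here the Landweber comparison of $k$ against $2k$ is replaced by $\alpha$ against $t\alpha$ with $t=1/8\tau$, and I would show that the ratio $I_n$ of the residuals at $t\alpha^\delta_{\rm dp}(m^n)$ and $\alpha^\delta_{\rm dp}(m^n)$ is a faithful test for the discretisation dimension being large enough. When $m^n$ is too small the discarded tail $\sum_{j>m^n}\varphi^2(\sigma_j^2)(\xi,v_j)^2$ still dominates and lowering the parameter barely reduces the residual, so $I_n$ stays below $1/8\tau$ and the loop continues; once $m^n$ exceeds the oracle dimension the residual is governed by the noise level $\tau\sqrt{m^n}\,\delta$, scaling the parameter down by $t$ changes the residual filter $\alpha/(\alpha+\sigma_j^2)$ only mildly, and $I_n$ surpasses the threshold so that the loop terminates at a safe dimension. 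This is where the additional concavity of $\varphi$ discussed before Algorithm 2 enters: it lets one compare $\varphi$ at $\alpha$ and $t\alpha$ and thereby convert the geometric change of the parameter into a controlled change of the bias, a step that for Landweber was automatic from the explicit factor $(1-\lambda)^k$.

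The main obstacle is precisely this two-sided control of $I_n$. For Landweber the doubling $k\mapsto 2k$ acts multiplicatively, $(1-\lambda)^{2k}=\big((1-\lambda)^k\big)^2$, which renders $I_n$ transparent; the Tikhonov filter $\alpha/(\alpha+\lambda)$ possesses no such semigroup structure under $\alpha\mapsto t\alpha$, so both bounds on $I_n$ must be obtained by hand, splitting each sum at the index where $\sigma_j^2\approx\alpha$ and estimating the low- and high-frequency parts separately. Once these bounds are secured, the remaining steps --- fixing $L'$, adding the bias and variance contributions at the selected $(\alpha^\delta_{\rm dp},\mdp)$, and letting $\delta/\rho\to0$ --- are identical to those in Theorem \ref{th2}. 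Finally, the reduction to concave index functions indicated before Algorithm 2 and carried out in the proof of Corollary \ref{cor1} removes concavity from the list of hypotheses, so that Assumption \ref{as1} together with the two qualifications suffices, as stated.
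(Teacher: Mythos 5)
Your overall strategy --- transporting the proof of Theorem \ref{th2} to the Tikhonov filter, with the \texttt{while}-loop as the one genuinely new ingredient --- is exactly the route the paper takes, and your remarks on the bias--variance decomposition, the role of the two qualifications, the concentration of the noise coefficients (Proposition \ref{prop0}), the correspondence ``minimise $\alpha^\delta_{\rm dp}(m)$ $\leftrightarrow$ maximise $\kdp(m)$'', and the reduction to concave index functions all match the paper's (sketched) argument. The gap is in what you yourself call the crux: your account of why the ratio $I_n$ tests the discretisation dimension is backwards, and it rests on the wrong quantity. First, the discarded tail $\sum_{j>m^n}\varphi^2(\sigma_j^2)(\xi,v_j)^2$ cannot ``dominate'' the residual: the residual $\|P_{m^n}Kx^\delta_{\alpha,m^n}-P_{m^n}y^\delta\|$ involves only the modes $j\le m^n$, so this tail never enters it. Second, the sentence ``lowering the parameter barely reduces the residual, so $I_n$ stays below $1/8\tau$ and the loop continues'' is self-contradictory: if the residual barely changes, then $I_n$ is close to $1$, i.e.\ \emph{above} the threshold, and the loop would stop.

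The actual mechanism, which the paper isolates in \eqref{subsec:eq3} and \eqref{subsec:eq2}, is the opposite of what you describe. Term by term the residual ratio equals $t(\alpha+\sigma_j^2)/(t\alpha+\sigma_j^2)$, which always exceeds $t$ and drops to about $t$ exactly on the modes with $\sigma_j^2\ge\alpha$. If $m$ is too small, the discrepancy principle over-fits and drives $\alpha^\delta_{\rm dp}(m)$ below $\sigma_m^2$; then \emph{every} retained mode satisfies $\sigma_j^2\ge\alpha$, the residual scales essentially linearly in $\alpha$, and $I_n<2t$ --- the residual is reduced by almost the full factor $t$, not ``barely'', and this is what keeps the loop running. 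Conversely, for $m\ge Cm_*$ one proves the analogue of \eqref{lem1:eq3}, namely $t\alpha^\delta_{\rm dp}(m)\omd\ge\sigma_{m_*}^2\omd$ --- this is where concavity of $\varphi$ (via $t\varphi(\alpha)\le\varphi(t\alpha)$), the defining relation of the discrepancy principle and the qualification enter --- and then the noise in the modes $m_*<j\le m$, which the filter leaves essentially untouched because $\alpha/(\alpha+\sigma_j^2)\approx1$ there, bounds $I_n$ from below by the threshold and forces termination with $\mdp\le Cm_*$. These two quantitative statements are precisely what feed the transported proof of Theorem \ref{th2}: the lower bound $\alpha^\delta_{\rm dp}\gtrsim\sigma_{m_*}^2$ replaces the upper bound \eqref{th2:eq1} on $\kdp$ in the variance and discretisation-error estimates, and the termination bound replaces Lemma \ref{lem1}. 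Your plan supplies neither, and since $I_n$ always lies in the narrow band between $t$ and $1$, the two-sided calibration of the test against the threshold is exactly the part that cannot be waved through by heuristics --- especially heuristics pointing in the wrong direction.
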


\section{Proofs}\label{sec:4}
In this section we give the proofs of the main results Theorem \ref{th1} and \ref{th2} and of the Corollaries. For the proofs we rely on the following central proposition, which states that the measurement error is  highly concentrated simultaneously for all $m$ large enough. Since we aim to prove bounds in probability this allows to focus on cases where the error behaves nicely.

\begin{proposition}\label{prop0}
	For any $\epsilon>0$ and $\kappa\in\N$ there holds
	
	$$\mathbb{P}\left(\left|\sum_{j=1}^m(y^\delta-y^\dagger,u_j)^2-m\delta^2\right| \ge \varepsilon m\delta^2,\quad\forall m\ge \kappa\right) \le\frac{1}{\varepsilon} \E\left[\left|\frac{1}{\kappa}\sum_{j=1}^\kappa\left((Z,u_j)^2-1\right) \right|\right] \to 0$$
	
	as $\kappa\to\infty$.
\end{proposition}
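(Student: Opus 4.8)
The plan is to establish a uniform (in $m \ge \kappa$) concentration inequality for the empirical error sum around its mean $m\delta^2$, via a maximal inequality argument. First I would rewrite the quantity of interest in a cleaner form. Since $(y^\delta - y^\dagger, u_j) = \delta(Z, u_j)$, we have $\sum_{j=1}^m (y^\delta - y^\dagger, u_j)^2 = \delta^2 \sum_{j=1}^m (Z, u_j)^2$, and by the white noise assumptions the random variables $\xi_j := (Z, u_j)$ satisfy $\E[\xi_j] = 0$, $\E[\xi_j^2] = 1$ (taking $y = y' = u_j$), and $\E[\xi_i \xi_j] = (u_i, u_j) = \delta_{ij}$, so they are uncorrelated with unit variance. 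The claimed event is then
\begin{equation*}
\left\{ \left| \sum_{j=1}^m (\xi_j^2 - 1) \right| \ge \varepsilon m \quad \forall m \ge \kappa \right\},
\end{equation*}
after dividing through by $\delta^2$. Writing $S_m := \sum_{j=1}^m (\xi_j^2 - 1)$, the goal becomes to bound $\mathbb{P}(\sup_{m \ge \kappa} |S_m|/m \ge \varepsilon)$.

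The key tool here is that the normalised partial-sum process $S_m/m$ of an i.i.d.\ (or suitably stationary) centered sequence is a reverse martingale in $m$, or equivalently that $(S_m/m)$ run backwards is governed by a maximal inequality. Concretely, I would invoke the reverse (Doob) maximal inequality: for a reverse martingale $(M_m)_{m \ge \kappa}$ adapted to the decreasing filtration $\mathcal{F}_m = \sigma(S_m, S_{m+1}, \dots)$, one has $\mathbb{P}(\sup_{m \ge \kappa} |M_m| \ge \varepsilon) \le \varepsilon^{-1}\E|M_\kappa|$. The averages $S_m/m = \frac{1}{m}\sum_{j=1}^m(\xi_j^2 - 1)$ are exactly such a reverse martingale, since $\E[S_{m}/m \mid \mathcal{F}_{m+1}] = S_{m+1}/(m+1)$ for exchangeable (here i.i.d.) increments. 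Applying the maximal inequality with $M_\kappa = S_\kappa/\kappa$ gives
\begin{equation*}
\mathbb{P}\left( \sup_{m \ge \kappa} \left| \frac{1}{m}\sum_{j=1}^m (\xi_j^2 - 1) \right| \ge \varepsilon \right) \le \frac{1}{\varepsilon} \E\left[ \left| \frac{1}{\kappa} \sum_{j=1}^\kappa (\xi_j^2 - 1) \right| \right],
\end{equation*}
which is precisely the asserted bound once one substitutes back $\xi_j = (Z, u_j)$ and recognises that the event $|S_m| \ge \varepsilon m$ for all $m \ge \kappa$ is contained in the supremum event.

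For the final convergence $\E|\frac{1}{\kappa}\sum_{j=1}^\kappa (\xi_j^2 - 1)| \to 0$ as $\kappa \to \infty$, I would argue as follows. Since the $\xi_j$ are i.i.d.\ with $\E[\xi_j^2] = 1$ finite, the strong law of large numbers gives $\frac{1}{\kappa}\sum_{j=1}^\kappa (\xi_j^2 - 1) \to 0$ almost surely. To upgrade this to $L^1$ convergence one needs uniform integrability of the averages; if one only assumes finite second moments of $Z$ (i.e.\ $\E[\xi_j^2] < \infty$ but not necessarily $\E[\xi_j^4] < \infty$), then $\frac{1}{\kappa}\sum (\xi_j^2 - 1) \to 0$ in $L^1$ follows directly from the $L^1$ law of large numbers for i.i.d.\ integrable summands $\xi_j^2 - 1$, since integrability of the summands is exactly the hypothesis under which the Cesàro averages converge in $L^1$. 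This sidesteps any need for higher moments.

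The main obstacle I anticipate is the justification that $(S_m/m)_{m \ge \kappa}$ genuinely forms a reverse martingale in the white-noise setting: the white noise definition supplies only the first- and second-moment relations plus the scaling-in-distribution property for the linear functionals $(Z, \cdot)$, and does not a priori grant full i.i.d.\ structure or even independence of the squared coordinates $\xi_j^2$. I would therefore need to check carefully that the stated moment axioms are enough — either by verifying the exchangeability/reverse-martingale property directly from the covariance structure and the distributional scaling axiom (which forces the $\xi_j$ to be identically distributed), or, if independence is not available, by replacing the reverse-martingale maximal inequality with a more elementary argument: controlling $\sup_{m \ge \kappa}|S_m|/m$ by a union/peeling bound over dyadic blocks $m \in [2^\ell, 2^{\ell+1})$ and applying a second-moment (Chebyshev-type) estimate on each block, summing the resulting geometric series. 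The cleanest route, and the one I would attempt first, is the reverse-martingale inequality, falling back on the dyadic peeling only if the requisite independence cannot be extracted from the three white-noise axioms.
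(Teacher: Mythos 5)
Your proposal takes essentially the same route as the paper: the paper does not prove Proposition \ref{prop0} itself but cites Proposition 4.1 of \cite{jahn2021optimal}, describing that proof as ``based on the backward martingale property of the residual error'' --- precisely your reverse-martingale maximal inequality applied to $S_m/m$, with the convergence $\E\left[\left|\frac{1}{\kappa}\sum_{j=1}^\kappa((Z,u_j)^2-1)\right|\right]\to 0$ following from reverse-martingale convergence (equivalently the $L^1$ law of large numbers), and with the event in the statement read, as you read it, as a supremum over $m\ge\kappa$. Your closing caveat is also apt: the three stated white-noise axioms only yield uncorrelated, identically distributed coordinates, and the exchangeability/i.i.d.\ structure needed for the reverse-martingale property is an assumption carried over from \cite{jahn2021optimal} rather than derivable from those axioms --- a looseness in the paper's formulation, not a flaw in your argument.
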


The proof of Proposition \ref{prop0} is based on the backward martingale property of the residual error and can be found in \cite{jahn2021optimal} (Proposition 4.1). 
Moreover, we need the following well-known properties of Landweber iteration, which can be proven by standard means (differentiation with respect to $\lambda$). For all $k\in\N$ there holds

\begin{align}\label{pr:eq0}
   \sup_{0<\lambda\le \|K\|^2}(1-\lambda)^{2k} \lambda &\le \frac{1}{k}.
\end{align}

and

\begin{equation}\label{pr:eq00}
\sup_{0<\lambda\le \|K\|^2}\frac{(1-(1-\lambda)^k)^2}{\lambda}  \le k.
\end{equation}

We start with the proof of the main result.

\subsection{Proof of Theorem \ref{th2}}

So $x^\dagger=\varphi(K^*K)\xi$ with $\xi\in\mathcal{X}$ and $\|\xi\|\le \rho$. We first define $\alpha_*=\alpha_*(\delta/\rho):=\Theta^{-1}\left(\delta^2/\rho^2\right)$  and $m_*:=\lceil x_{\alpha_*}\rceil$ with $\Theta$ and $x_{\cdot}$ given in the preceding section. The number $m_*$ can be seen as the optimal discretisation level, see Remark \ref{rem1} above. While the singular value decomposition is not needed to calculate the Landweber iterates, it is handy for the analysis to express them in terms of the singular vectors. The expansion is given by

\begin{align*}
x^\delta_{k,m} = \sum_{j=1}^m \frac{1-(1-\sigma_j^2)^{k}}{\sigma_j}(y^\delta,u_j)v_j.
\end{align*}

By \eqref{rem1:eq1} and monotonicity of $\Theta$ it follows that

\begin{equation}\label{th2:eq00}
\sigma_{m}^2\varphi^2(\sigma_{m}^2)\rho^2\ge m\delta^2\qquad\mbox{and}\qquad \sigma_{m'}^2\varphi^2(\sigma_{m'}^2)\rho^2 \le m'\delta^2
\end{equation}

for all $m\le m_*$ and $m'\ge m_*+1$.  We will perform the analysis on a set of nice events

\begin{equation}\label{proof:eq1}
\Omega_{\delta/\rho}:=\left\{\left|\sqrt{\sum_{j=1}^m(y^\delta-y^\dagger,u_j)^2}-\sqrt{m}\delta\right|\le \min\left(\frac{\tau-1}{2},\frac{1}{10}\right)\sqrt{m}\delta,~\forall m\ge m_*\right\}
\end{equation}

where the error behaves regularly. Clearly, $\mathbb{P}\left(\Omega_{\delta/\rho}\right)\to 1$ as $\delta/\rho\to0$ by Proposition \ref{prop0}. We will frequently use that by definition of $\Omega_{\delta/\rho}$ there holds

\begin{align*}
\sqrt{\sum_{j=1}^m(y^\delta-y^\dagger,u_j)^2}\omd&\le \frac{\tau+1}{2}\sqrt{m}\delta,\\
\frac{9}{10}\sqrt{m}\delta\omd\le\sqrt{\sum_{j=1}^m(y^\delta-y^\dagger,u_j)^2}\omd&\le \frac{11}{10}\sqrt{m}\delta
\end{align*}
for all $m\ge m_*$.  As a first step we show boundedness of $\mdp$.

\begin{lemma}\label{lem1}
	There holds
	
	$$\mdp \omd \le B_\tau m_*\omd$$
	
	for $B_\tau:=A_\tau\max\left(\frac{2}{\tau-1},\left(\frac{21}{4}\right)^2\right)$ and $A_\tau$ given below.	
\end{lemma}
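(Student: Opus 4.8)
The plan is to run the whole argument on the event $\Omega_{\delta/\rho}$ and to replace the random quantity $\kdp(m)$ by a deterministic, signal-controlled majorant that is monotone in $m$; the bound on $\mdp$ is then read off from the fact that each $m^n$, and in particular $\mdp=m^{n_*}$, is a maximiser of $m\mapsto\kdp(m)$ over a tail $\{m\ge m^{n-1}\}$. Writing $a_j:=(y^\dagger,u_j)=\sigma_j\varphi(\sigma_j^2)(\xi,v_j)$ for the signal, the Landweber residual is $\sum_{j=1}^m(1-\sigma_j^2)^{2k}(a_j+(y^\delta-y^\dagger,u_j))^2$, so by the triangle inequality in the weighted $\ell^2$-norm its norm is at most $(\sum_{j=1}^m(1-\sigma_j^2)^{2k}a_j^2)^{1/2}$ plus $(\sum_{j=1}^m(1-\sigma_j^2)^{2k}(y^\delta-y^\dagger,u_j)^2)^{1/2}$. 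On $\Omega_{\delta/\rho}$ the noise term is bounded by $\sqrt{\sum_{j=1}^m(y^\delta-y^\dagger,u_j)^2}\le\frac{\tau+1}{2}\sqrt m\delta$ for every $m\ge m_*$, strictly below the threshold $\tau\sqrt m\delta$ of \eqref{int0:eq2}.

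Consequently, on $\Omega_{\delta/\rho}$ the discrepancy principle must already have stopped once the pure signal residual has fallen below $\frac{\tau-1}{2}\sqrt m\delta$, which yields the deterministic majorant
\[
\kdp(m)\,\omd\le\bar k(m)\,\omd,\qquad \bar k(m):=\min\Big\{k:\sum_{j=1}^m(1-\sigma_j^2)^{2k}a_j^2\le\tfrac{(\tau-1)^2}{4}m\delta^2\Big\},
\]
valid for all $m\ge m_*$; this is the origin of the factor $\frac{2}{\tau-1}$ in $B_\tau$. This reduction is the heart of the matter: without it, a locally large noise contribution could inflate $\kdp(m)$ at some large $m$ — exactly the mechanism producing the divergence in Theorem \ref{th1} — and it is precisely this that $\Omega_{\delta/\rho}$ excludes uniformly in $m$.

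To make $\bar k$ explicit and monotone, I would use that $\lambda\mapsto\sqrt\lambda\,\varphi(\lambda)$ is a qualification of Landweber iteration (Assumption \ref{as1}), giving $\sum_{j=1}^m(1-\sigma_j^2)^{2k}\sigma_j^2\varphi^2(\sigma_j^2)(\xi,v_j)^2\le\frac1k\varphi^2(1/k)\rho^2$, so $\bar k(m)$ is at most the smallest $k$ with $\frac1k\varphi^2(1/k)\le\frac{(\tau-1)^2}{4}\frac{m\delta^2}{\rho^2}$. By \eqref{th2:eq00} the signal contribution beyond $m_*$ is controlled by $m_*\delta^2$, while the threshold grows linearly in $m$, so this bound is non-increasing in $m$; inserting $\frac{\delta^2}{\rho^2}=\Theta(\alpha_*)$ and $m_*=\lceil x_{\alpha_*}\rceil$ together with the definitions of $\Theta$ and $x_\alpha$ turns it into a comparison between the scale $1/\bar k(m)$ and $\sigma_m^2$, and this is where the constant $A_\tau$ is produced.

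Finally I would compare across dimensions. The claim to establish is that for every $m\ge B_\tau m_*$ one has $\kdp(m)\,\omd<\kdp(\tilde m)\,\omd$ for a suitable $\tilde m$ with $m^{n-1}\le\tilde m\le B_\tau m_*$; granting this, such $m$ can never realise the maximum $\max_{m'\ge m^{n-1}}\kdp(m')$, so the minimiser $m^n$ defining each step of Algorithm \ref{algorithm1} stays $\le B_\tau m_*$, whence $\mdp\,\omd\le B_\tau m_*\,\omd$. The main obstacle is exactly this comparison: the upper side is the clean decreasing majorant $\bar k(B_\tau m_*)$, but the matching lower bound on the stopping index at a moderate dimension $\tilde m\approx m_*$ is delicate, since it must hold uniformly over the whole source ball $\mathcal{X}_{\varphi,\rho}$ and can only be extracted from the minimality $\|P_{\tilde m}K x^\delta_{\kdp(\tilde m)-1,\tilde m}-P_{\tilde m}y^\delta\|>\tau\sqrt{\tilde m}\delta$ together with \eqref{th2:eq00}. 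Tracking the numerical constants through this comparison (the factors $\frac{11}{10},\frac{\tau+1}{2}$ from $\Omega_{\delta/\rho}$ and the conversion through $\Theta$) is what fixes $A_\tau$, and hence $B_\tau=A_\tau\max(\frac{2}{\tau-1},(21/4)^2)$.
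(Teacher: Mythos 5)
Your proposal has two genuine gaps, and the first one concerns the actual crux of the lemma. The quantity $\mdp$ is not ``the smallest maximiser of $\kdp$''; it is $m^{n_*}$, where $n_*$ is the first index at which the ratio test $I_n\ge \frac{1}{2\tau e^4}$ of the \texttt{while}-loop is passed. Your argument bounds the candidates $m^n$ but never shows that the loop terminates at all, nor that it terminates before $m^n$ can exceed $B_\tau m_*$ — without this, $\mdp$ is not even well defined and nothing can be concluded. The paper devotes the larger part of the proof to exactly this point: it first shows (assertion \eqref{lem1:eq3}) that on $\Omega_{\delta/\rho}$ one has $m_{\kdp(m)}\le m_*$ for $m\ge(\frac{2}{\tau-1})^2 m_*$, hence $(1-\sigma_{m_*+1}^2)^{2\kdp(m)}\ge e^{-4}$, and then (claim \eqref{lem1:eq2}) lower-bounds the ratio by splitting off the noise carried by the components $j>m_*$, obtaining $I_n\ge\frac{1}{e^4\tau}\bigl(\frac{9}{10}-\frac{21}{10}\sqrt{m_*/m}\bigr)\ge\frac{1}{2\tau e^4}$ once $m\ge(\frac{21}{4})^2m_*$. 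This is precisely where both the factor $\max\bigl(\frac{2}{\tau-1},(\frac{21}{4})^2\bigr)$ in $B_\tau$ and the algorithmic threshold $\frac{1}{2\tau e^4}$ come from; your proposal contains no mechanism that could produce either.

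The second gap is in your step 3, the cross-dimension comparison (the analogue of the paper's claim \eqref{lem1:eq1}, which supplies the factor $A_\tau$). First, as stated with a strict inequality it is false: if, e.g., $x^\dagger=0$, then on $\Omega_{\delta/\rho}$ the index $\kdp(m)$ takes the same minimal value for every $m\ge m_*$, so no $\tilde m$ with $\kdp(m)<\kdp(\tilde m)$ exists (the non-strict version suffices, since the algorithm takes the smallest maximiser). Second, and more fundamentally, your proposed route — a decreasing deterministic majorant for $\kdp(m)$ combined with a matching lower bound on $\kdp(\tilde m)$ at a moderate $\tilde m$ — cannot be completed: minimality of the discrepancy principle at $\tilde m$ only yields that the signal residual one step before stopping exceeds $\frac{\tau-1}{2}\sqrt{\tilde m}\delta$, which is upper-bound-type information on $\kdp(\tilde m)$, and no nontrivial lower bound on $\kdp(\tilde m)$ holds uniformly over $\mathcal{X}_{\varphi,\rho}$ (again, small signals force $\kdp(\tilde m)$ to its minimum). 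The paper avoids this by coupling the two dimensions instead of decoupling them: assuming $\kdp(m)>\kdp(m')$ with $m\ge A_\tau m'$, it evaluates both residuals at the \emph{same} index $k=\kdp(m')$ for the \emph{same} unknown $\xi$, so that the discrepancy between $\frac{\tau-1}{2}\sqrt{m}\delta$ and $\frac{3\tau+1}{2}\sqrt{m'}\delta$ must be explained by the signal in the window $(m',m]$, which the tail bound derived from \eqref{th2:eq00} caps at $\sqrt{2m'}\delta$ — a contradiction. Your first two steps (the signal-only majorant of $\kdp(m)$ on $\Omega_{\delta/\rho}$ via the qualification) are correct and do mirror ingredients of the paper's argument, but they are the easy part.
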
 
\begin{proof}[Proof of Lemma \ref{lem1}]
	 We need to show that the Algorithm is well-defined and we first need to control the $m^n$ for $n\in\N$. In order to do so we claim that 
	
	\begin{equation}\label{lem1:eq1}
	k_{dp}^\delta(m)\omd \le k_{dp}^\delta(m')\omd
	\end{equation}
	
	for all pairs $(m,m')\in\N^2$ with $m\ge A_\tau m'$ and $m'\ge m_*$, where $A_\tau:=2\left(\frac{2}{\tau-1}+\frac{3\tau+1}{\tau-1}\right)^2$. To prove \eqref{lem1:eq1} we argue by contradiction and assume that there exist $m,m'$ with $m\ge A_\tau m'$ and $m'\ge m_*$ such that $k_{dp}^\delta(m) > k_{dp}^\delta(m')$. Then, for $k=k_{dp}^\delta(m')$ we apply the triangle inequality to the defining relation of the discrepancy principle (noticing that $k<\kdp(m)$) and obtain
	
	\begin{align}\label{pr:eq1}
	\tau\sqrt{m}\delta &< \sqrt{\sum_{j=1}^m(1-\sigma_j^{2})^{2k}(y^\delta,u_j)^2}\\\notag
	&\le \sqrt{\sum_{j=1}^m(1-\sigma_j^2)^{2k}(y^\delta-y^\dagger,u_j)^2}+\sqrt{\sum_{j=1}^m(1-\sigma_j^2)^{2k}(y^\dagger,u_j)^2}\\\notag
	 &\le \sqrt{\sum_{j=1}^m(y^\delta-y^\dagger,u_j)^2} +\sqrt{\sum_{j=1}^m(1-\sigma_j^2)^{2k}(y^\dagger,u_j)^2}.
	 \end{align}
	 
	 Similarly, with the reverse triangle inequality we deduce
	 
	 \begin{align}\label{pr:eq2}
	\tau\sqrt{m'}\delta &\ge \sqrt{\sum_{j=1}^{m'}(1-\sigma_j^{2})^{2k}(y^\delta,u_j)^2}\\\notag
	&\ge\sqrt{\sum_{j=1}^{m'}(1-\sigma_j^2)^{2k}(y^\dagger,u_j)^2}  - \sqrt{\sum_{j=1}^{m'}(y^\delta-y^\dagger,u_j)^2}.
	\end{align}
	
	By definition of $\omd$, from \eqref{pr:eq1} and \eqref{pr:eq2}  we then obtain
	
	\begin{align*}
	\frac{\tau-1}{2}\sqrt{m}\delta\omd <\sqrt{\sum_{j=1}^m(1-\sigma_j^2)^{2k}(y^\dagger,u_j)^2}
	\end{align*}
	and
	\begin{align*}
        \frac{3\tau+1}{2}\sqrt{m'}\delta\omd \ge \sqrt{\sum_{j=1}^{m'}(1-\sigma_j^2)^{2k}(y^\dagger,u_j)^2}\omd.
	\end{align*}
	
	Subtracting the second from the first inequality and concavity of the square root yields 
	
	\begin{align}\label{prop1a:eq1}
	&\frac{3\tau+1}{2}\left(\frac{\tau-1}{3\tau+1} \sqrt{m}-\sqrt{m'}\right)\delta\omd\\\notag
	 < &\sqrt{\sum_{j=1}^m(1-\sigma_j^2)^{2k}(y^\dagger,u_j)^2}-\sqrt{\sum_{j=1}^{m'}(1-\sigma_j^2)^{2k}(y^\dagger,u_j)^2}\\\notag
	\le & \sqrt{\sum_{j=m'+1}^\infty(1-\sigma_j^2)^{2k}(y^\dagger,u_j)^2}
	\le \sqrt{\sum_{j=m'+1}^\infty(y^\dagger,u_j)^2}\\\notag
	= &\sqrt{\sum_{j=m'+1}^\infty\sigma_j^2\varphi^2(\sigma_j^2) (\xi,v_j)^2}\le \sqrt{\sup_{0<\lambda\le \sigma_{m'+1}^2}\lambda\varphi^2(\lambda)}\sqrt{\sum_{j=m'+1}^\infty(\xi,v_j)^2}\\\notag
	\le &\sqrt{\sigma_{m'+1}^2\varphi^2\left(\sigma_{m'+1}^2\right)\rho^2}
	 \le \sqrt{m'+1} \delta\le \sqrt{2m'}\delta,
	\end{align}
	
	where we used $m'+1\ge m_*+1$ and \eqref{th2:eq00} in the last line. On the other hand,
	
	\begin{align}\label{prop1a:eq2}
	\frac{3\tau+1}{2}\left(\frac{\tau-1}{3\tau+1} \sqrt{m}-\sqrt{m'}\right)&\ge \frac{3\tau+1}{2}\left(\frac{\tau-1}{3\tau+1} \sqrt{A_\tau} -1 \right)\sqrt{m'} = \sqrt{2m'}
	\end{align}
	
	Using the above inequalities \eqref{prop1a:eq1} and \eqref{prop1a:eq2} gives a contradiction
	
	$$\sqrt{m'} \delta\omd<\sqrt{m'}\delta,$$

	therefore the claim \eqref{lem1:eq1} is proven. As a second ingredient we state that it holds that
	
	\begin{equation}\label{lem1:eq2}
	\frac{\|Kx^\delta_{2k^\delta_{\rm dp}(m), m} - P_my^\delta\|}{\|Kx^\delta_{k^\delta_{\rm dp}(m), m} - P_my^\delta\|}\omd \ge \frac{1}{2\tau e^4}\omd
	\end{equation}
	
	for all $m\ge \max\left(\frac{2}{\tau-1},\left(\frac{21}{4}\right)^2\right)m_*$. In order to prove this claim we need another auxiliary definition. For $k\in\N$ we set
	
	\begin{equation}
	m_k:= \lfloor x_{\frac{1}{k-1}}\rfloor= \max\left\{m\in\N~:~\sigma_m^2>(k-1)^{-1}\right\}.
	\end{equation}

	We make the following important assertion:
	
	\begin{equation}\label{lem1:eq3}
	m_{k_{dp}^\delta(m)}\omd \le m_*\omd
	\end{equation}
	for all $m\ge \left(\frac{2}{\tau-1}\right)^2m_*$. To prove assertion \eqref{lem1:eq3}  we assume w.l.o.g. that $k=k_{dp}^\delta(m)>1$. Then the defining relation of the discrepancy principle together with the above assumption that $\lambda\mapsto \sqrt{\lambda}\varphi(\lambda)$ is a qualification of Landweber iteration once more yields
	
	\begin{align*}
	\frac{\tau-1}{2}\sqrt{m}\delta\omd&< \sqrt{\sum_{j=1}^m(1-\sigma_j^2)^{2(k-1)}(y^\dagger,u_j)^2}\le\sqrt{\sum_{j=1}^m(1-\sigma_j^2)^{2(k-1)}\sigma_j^2\varphi^2(\sigma_j^2)(\xi,v_j)^2}\\
	 &\le \sqrt{\sum_{0<\lambda\le \|K\|^2}(1-\lambda)^{2(k-1)}\lambda \varphi^2\left(\lambda\right) \sum_{j=1}^m(\xi,v_j)^2}\\
	 &\le  \sqrt{\frac{1}{k-1}\varphi^2\left(\frac{1}{k-1}\right)\rho^2}\le \sqrt{\sigma^2_{m_k}\varphi^2\left(\sigma^2_{m_k}\right)\rho^2}.
	\end{align*}
	
	Therefore,
	
	\begin{align*}
	\sigma_{m_{k}}^2 \varphi^2\left(\sigma_{m_{k}}^2\right) \rho^2\omd &\ge \left(\frac{\tau-1}{2}\right)^2 m\delta^2\omd \ge m_*\delta^2\omd\\
	&\ge \frac{1}{\Theta(\alpha_*)} \alpha_*\varphi^2(\alpha_*) \delta^2\omd = \alpha_*\varphi^2\left(\alpha_*\right) \rho^2\omd
	\end{align*}
	
	by definition and monotonicity of $\alpha\mapsto \alpha\varphi^2(\alpha)$. Again by monotonicity, the above inequality implies that $\alpha_*\omd\le \sigma_{m_k}^2$ and therefore $m_*\omd \ge x_{\alpha_*}\omd \ge m_k\omd$, which itself implies \eqref{lem1:eq3}.
	
		We move on to the proof of the claim \eqref{lem1:eq2}. Inserting the definition of the discrepancy principle for the nominator  and using  \eqref{lem1:eq3} we have that

	\begin{align*}
	&\frac{\|P_mKx^\delta_{2k^\delta_{\rm dp}(m), m} - P_my^\delta\|}{\|Kx^\delta_{k^\delta_{\rm dp}(m), m} - P_my^\delta\|}\omd\\
	 \ge &\frac{\sqrt{\sum_{j=1}^m(1-\sigma_j^2)^{4k}(y^\delta,u_j)^2}}{\tau \sqrt{m}\delta}\omd\ge \frac{1}{\tau\sqrt{m}\delta}\sqrt{\sum_{j=m_*+1}^m(1-\sigma_j^2)^{4k}(y^\delta,u_j)^2}\omd\\
	&\ge \frac{(1-\sigma_{m_*+1}^2)^{-2k}}{\tau\sqrt{m}\delta}\sqrt{\sum_{j=m_*+1}^m(y^\delta,u_j)^2}\omd\\
	&\ge \frac{1}{(1-(k-1)^{-1})^{2k}\tau\sqrt{m}\delta} \left(\sqrt{\sum_{j=m_*+1}^m(y^\delta-y^\dagger,u_j)^2}-\sqrt{\sum_{j=m_*+1}^m(y^\dagger,u_j)^2}\right)\omd\\
	&\ge \frac{1}{e^4\tau\sqrt{m}\delta} \left(\sqrt{\sum_{j=1}^m(y^\delta-y^\dagger,u_j)^2}-\sqrt{\sum_{j=1}^{m_*}(y^\delta-y^\dagger,u_j)^2}\right.\\
	&\qquad\qquad\qquad\qquad\qquad\qquad \left.-\sqrt{\sum_{j=m_*+1}^m(y^\dagger,u_j)^2}\right)\omd\\
		&\ge \frac{1}{e^4\tau\sqrt{m}\delta}\left(\frac{9}{10}\sqrt{m}\delta\omd - \frac{11}{10}\sqrt{m_*}\delta- \sqrt{\sum_{j=m_*+1}^\infty\sigma_j^2\varphi^2(\sigma_j^2)(\xi,v_j)^2}\right)\omd\\
	&\ge \frac{1}{e^4\tau\sqrt{m}\delta}\left(\frac{9}{10}\sqrt{m}\delta\omd - \frac{11}{10}\sqrt{m_*}\delta- \sigma_{m_*+1}\varphi(\sigma_{m_*+1}^2) \rho\right)\omd\\
    &\ge \frac{1}{e^4\tau\sqrt{m}\delta}\left(\frac{9}{10}\sqrt{m}\delta\omd - \frac{11}{10}\sqrt{m_*}\delta- \sqrt{m_*} \delta\right)\omd\\
    \ge&\frac{1}{e^4\tau\sqrt{m}\delta}\left(\frac{9}{10}-\frac{21}{10}\frac{4}{21}\right)\sqrt{m}\delta\omd
     \ge\frac{1}{2e^4\tau}\omd
	\end{align*}
	
	and the claim is proven. Now we are in position to finish the proof of the Lemma. First \eqref{lem1:eq1} implies that $m^n\omd\le A_\tau \max(m_*,m^{n-1})\omd$ for all $n\in\N$. Moreover, \eqref{lem1:eq3} implies that as soon as $m^n\omd\ge \max(\frac{1}{\tau-1},\left(\frac{21}{4}\right)^2)m_*\omd$ there holds
	
	$$\frac{\|Kx^\delta_{2k^\delta_{\rm dp}(m^n), m^n} - P_my^\delta\|}{\|Kx^\delta_{k^\delta_{\rm dp}(m^n), m^n} - P_{m^n}y^\delta\|}\omd \ge \frac{1}{2\tau e^4}\omd.$$

	Therefore, $n_*$ is finite and we have $m^{n_*}\omd\le A_\tau\max\left(\frac{2}{\tau-1},\left(\frac{21}{4}\right)^2\right)m_*$. The proof of the Lemma is concluded.
\end{proof}

We come to the main proof of Theorem \ref{th2} and for that split the error into three parts, a data propagation error, an approximation error and a discretisation error. Hereby, this can be seen as a bias-variance decomposition where the first term belongs to the variance part and the other two to the bias part. So, 

\begin{align*}
&\|\xdp -x^\dagger\|\\ \le &\sqrt{\sum_{j=1}^\mdp \frac{(1-(1-\sigma_j^2)^\kdp)^2}{\sigma_j^2}(y^\delta-y^\dagger,u_j)^2} +\sqrt{ \sum_{j=1}^\mdp(1-\sigma_j^2)^{2\kdp}(x^\dagger,v_j)^2}\\
&\qquad\qquad\qquad  + \sqrt{\sum_{j=\mdp+1}^\infty(x^\dagger,v_j)^2}.
\end{align*}

We treat the three terms individually. We start with the data propagation error. With the estimate \eqref{pr:eq00} we get 

\begin{align*}
&\sqrt{\sum_{j=1}^{m_{dp}^\delta}(1-(1-\sigma_j^2)^k)^2\sigma_j^{-2}(y^\delta-y^\dagger,u_j)^2}\omd\\
 \le &\sup_{0<\lambda\le \|K\|^2}|1-(1-\lambda)^\kdp|\sqrt{\lambda^{-1}}\sqrt{\sum_{j=1}^\mdp(y^\delta-y^\dagger,u_j)^2}\omd\le \frac{\tau+1}{2} \sqrt{m_{dp}^\delta k_{dp}^\delta} \delta\omd
\end{align*}

and distinguish the cases  $k_{dp}^\delta-1 \ge \alpha_*^{-1}$ and $k_{dp}^\delta-1<\alpha_*^{-1}$. For the first case we once more use the defining relation of the discrepancy principle with the fact that $\lambda\mapsto \sqrt{\lambda}\varphi(\lambda)$ is a qualification of Landweber iteration to obtain

\begin{align*}
\sqrt{m_{dp}^\delta} \delta\omd &< \frac{2}{\tau-1} \sqrt{\sum_{j=1}^{m_{dp}^\delta}(1-\sigma_j^2)^{2(k_{dp}^\delta-1)}(y^\dagger,u_j)^2} \le\frac{2}{\tau-1}\sqrt{ \frac{1}{k_{dp}^\delta-1}\varphi^2\left(\frac{1}{k_{dp}^\delta-1}\right) \rho^2},
\end{align*}

which in turn implies

\begin{align*}
\sqrt{m_{dp}^\delta k_{dp}^\delta} \delta \omd&\le 2\sqrt{\mdp (\kdp-1)} \delta\omd<\frac{4}{\tau-1}\varphi\left(\frac{1}{\kdp-1}\right)\rho\le \frac{4}{\tau-1} \varphi\left(\alpha_*\right) \rho.
\end{align*}

In the latter case we use that $m_{dp}^\delta\omd\le B_\tau m_*$ by Lemma \ref{lem1}, thus

$$\sqrt{m_{dp}^\delta k_{dp}^\delta} \delta \le \sqrt{B_\tau} \sqrt{\frac{m_*}{\alpha_*}}\delta \le \sqrt{B_\tau} \sqrt{\frac{x_{\alpha_*}}{\alpha_*}}\delta=\sqrt{B_\tau}\varphi\left(\alpha_*\right)\rho$$

by definition of $m_*$. Therefore the data propagation error is bounded by 

$$\sqrt{\sum_{j=1}^{m_{dp}^\delta}(1-(1-\sigma_j^2)^k)^2\sigma_j^{-2}(y^\delta-y^\dagger,u_j)^2}\omd\le \sqrt{B_\tau}\varphi\left(\alpha_*\right) \rho = \sqrt{B_\tau}\varphi\left(\Theta^{-1}\left(\frac{\delta^2}{\rho^2}\right)\right)\rho.$$

We move on to the approximation error, for which we employ a conditional stability estimate. By Proposition 2 of \cite{hohage2000regularization} it holds that

\begin{align}\notag
&\sum_{j=1}^{m_{dp}^\delta}(1-\sigma_j^2)^{2k_{dp}^\delta}(x^\dagger,v_j)^2 \omd\\
\le &\rho^2 \phi^{-1}\left(\frac{\sum_{j=1}^{m_{dp}^\delta}\sigma_j^2(1-\sigma_j^2)^{2\kdp}(x^\dagger,v_j)^2}{\rho^2}\right)\omd
\le \rho^2 \phi^{-1}\left(\frac{\sum_{j=1}^\mdp(y^\dagger,u_j)^2}{\rho^2}\right)\\
\le &\rho^2\phi^{-1}\left(\frac{2}{\rho^2}\left(\sum_{j=1}^\mdp(y^\delta,u_j)^2 + \sum_{j=1}^\mdp(y^\delta-y^\dagger,u_j)^2\right)\right)\\\notag
\le &\rho^2\phi^{-1}\left(\frac{2\left(\tau^2+\frac{(\tau+1)^2}{4}\right)m_{dp}^\delta \delta^2}{\rho^2}\right)\omd\le \rho^2 4\tau^2B_\tau\phi^{-1}\left(\frac{m_* \delta^2}{\rho^2}\right)\\\label{th2:eq2}
 = &4\tau^2 B_\tau \rho^2 \phi^{-1}\left(\alpha_* \varphi^2(\alpha_*)\right)= 4\tau^2 B_\tau \rho^2 \varphi^2(\alpha_*) =4\tau^2B_\tau \varphi^2\left(\Theta^{-1}\left(\frac{\delta^2}{\rho^2}\right)\right)\rho^2,
\end{align}

where we used Lemma \ref{lem1}, as well as monotonicity of $\phi^{-1}$ and concavity of $\phi^{-1}$ together with $\phi^{-1}(0)=0$ and $4\tau^2B_\tau\ge1$.
 
Finally we treat the discretisation error.  We first make the following observation. It holds that

\begin{equation}\label{th2:eq1}
\kdp\omd \le \frac{4+\log(2\tau)}{\sigma_\mdp^2}.
\end{equation}

We prove the assertion \eqref{th2:eq1} by contradiction and  make the assumption that $\kdp> (4+\log(2\tau))\sigma_\mdp^{-2}\omd$. Then, by monotonicity of $(\sigma_j^2)_j$,

\begin{align*}
\frac{\|Kx^\delta_{2\kdp, \mdp} - P_{\mdp}y^\delta\|}{\|Kx^\delta_{\kdp, \mdp} - P_{\mdp}y^\delta\|}\omd&= \sqrt{\frac{\sum_{j=1}^\mdp(1-\sigma_j^2)^{4\kdp}(y^\delta,u_j)^2}{\sum_{j=1}^\mdp(1-\sigma_j^2)^{2\kdp}(y^\delta,u_j)^2}}\omd
\\\notag
\le &(1-\sigma_{\mdp}^2)^{\kdp}\omd\le e^{-\kdp\sigma_\mdp^2}\omd
< e^{-4-\log(2\tau)}=\frac{1}{2\tau e^4},
\end{align*}

which contradicts the definition of $n_*$ and therefore proves \eqref{th2:eq1}. We have that by definition of $m_*$ there holds

\begin{align*}
\sum_{j=m_{dp}^\delta+1}^\infty(x^\dagger,v_j)^2 &= \sum_{j=m_{dp}^\delta+1}^{m_*}(x^\dagger,v_j)^2 + \sum_{j=m_*+1}^\infty\varphi^2(\sigma_j^2)(\xi,v_j)^2\\
 &\le \sum_{j=m_{dp}^\delta+1}^{m_*}(x^\dagger,v_j)^2 + \varphi^2(\sigma_{m_*+1}^2)\rho^2\\
 &\le \sum_{j=\mdp+1}^{m_*}(x^\dagger,v_j)^2 + \varphi^2\left(\Theta^{-1}\left(\frac{\delta^2}{\rho^2}\right)\right)\rho^2
\end{align*}

where we use the convention that $\sum_{j=n}^m = 0$ for $n>m$. We observe that only the first summand in the case that $\mdp<m_*$ has to be investigated. Using \eqref{th2:eq1} in the third step below yields

\begin{align*}
\sum_{j=m_{dp}^\delta+1}^{m_*}(x^\dagger,v_j)^2\omd &\le (1-\sigma_{m_{dp}^\delta}^2)^{-2k_{dp}^\delta} \sum_{j=m_{dp}^\delta+1}^{m_*}(1-\sigma_j^2)^{2k_{dp}^\delta}(x^\dagger,v_j)^2\omd\\
&\le 2\left(e^{k_{dp}^\delta \sigma_{m_{dp}^\delta}^2}\right)^2 \sum_{j=m_{dp}^\delta+1}^{m_*}(1-\sigma_j^2)^{2k_{dp}^\delta}(x^\dagger,v_j)^2\omd\\
&\le 8e^8\tau^2 \sum_{j=1}^{m_*}(1-\sigma_j^2)^{2k_{dp}^\delta}(x^\dagger,v_j)^2.
\end{align*}

By construction of $m_{dp}^\delta$, we have that for $m_{dp}^\delta< m_*$ there holds $k_{dp}^\delta \ge k_{dp}^\delta(m_*)$. Finally, by the same arguments used in \eqref{th2:eq2} above

\begin{align*}
\sum_{j=1}^{m_*}(1-\sigma_j^2)^{2k_{dp}^\delta}(x^\dagger,v_j)^2 \omd&\le \sum_{j=1}^{m_*}(1-\sigma_j^2)^{2k_{dp}^\delta(m_*)}(x^\dagger,v_j)^2 \omd\\
&\le 4\tau^2\rho^2 \phi^{-1}\left(\alpha_*\varphi^2(\alpha_*)\right)
= 4 \tau^2 \varphi^2\left(\Theta^{-1}\left(\frac{\delta^2}{\rho^2}\right)\right)\rho^2
\end{align*}

and putting all three estimates together shows that there exists a constant $L$ with

$$\|x_{k_{dp}^\delta,m_{dp}^\delta}^\delta - x^\dagger\| \omd\le L\rho \varphi\left(\Theta^{-1}\left(\frac{\delta^2}{\rho^2}\right)\right).$$

This finishes the proof of Theorem \ref{th2}.

\subsection{Proof of Theorem \ref{th1}}

The ultimate goal will be to show that 

\begin{equation}\label{th0:eq0}
\mathbb{P}\left( \mdp= 1\right)\ge \frac{1}{48}
\end{equation}

for all $\delta>0$ small enough, since if $\mdp= 1$ by definition of $x^\dagger$ there clearly holds

$$\|\xdp - x^\dagger\|\ge \sqrt{\sum_{j=2}^\infty(x^\dagger,u_j)^2}=\sqrt{\|v_2\|^2/2}=1/\sqrt{2}$$

for all $\delta>0$, because $\xdp\in{\rm span}(v_1,...,v_{\mdp})$. We define the event 

\begin{align}\label{th0:eq1}
\Omega_{\delta}:&=\left\{0<(Z,u_1)\le\left(\sqrt{\frac{9}{8}}-1\right) \frac{\sigma_1}{\delta}\right\}\\
&\qquad\qquad\cap \left\{-\sigma_1\le(Z,u_2)\le 0\right\}\cap \left\{\sum_{j=3}^m(Z,u_j)^2 \le \tau^2 (m-2),~\forall m\ge 3\right\}.
\end{align}

By independence, 

\begin{align*}
&\mathbb{P}\left(\Omega_\delta\right)\\= &\mathbb{P}\left(0<(Z,u_1)\le\left(\sqrt{\frac{9}{8}}-1\right) \frac{\sigma_1}{\delta}\right) \mathbb{P}\left(-\sigma_1\le(Z,u_2)\le 0\right)\\
&\qquad\qquad\qquad\cdot\mathbb{P}\left(\sum_{j=3}^m(Z,u_j)^2 \le \tau^2 (m-2),~\forall m\ge 3\right)\\
=&\left(\Phi\left(\left(\sqrt{\frac{9}{8}}-1\right) \frac{\sigma_1}{\delta}\right)-\Phi(0)\right)\left(\Phi(0)-\Phi\left(\frac{\sigma_1}{\delta}\right)\right)\left(1-\frac{1}{\tau^2-1}\E\left|(Z,u_1)^2-1\right|\right)\\
\ge &\left(\Phi\left(\left(\sqrt{\frac{9}{8}}-1\right) \frac{\sigma_1}{\delta}\right)-\Phi(0)\right)\left(\Phi(0)-\Phi\left(\frac{\sigma_1}{\delta}\right)\right)\left(1-\frac{2}{4-1}\right)\\
\ge &\frac{1}{4} \frac{1}{4}\frac{1}{3}=\frac{1}{48}
\end{align*}

for $\delta$ small enough, where $\Phi$ is the cumulative distribution function of a standard Gaussian and where we have used Proposition \ref{prop0} in the second step (note that $\delta(y^\delta-y^\dagger,u_j) = (Z,u_j)$ and shift the index). There exist $\delta>0$ arbitrarily small such that it is possible to choose $k_\delta\in\N$ with

$$\tau^2\delta^2\le(1-\sigma_1^2)^{2k_\delta}\sigma_1^2\le \frac{4}{3}\tau^2\delta^2.$$

We observe that \eqref{th0:eq0} follows, if we can show that

\begin{align}\label{th0:eq4}
\sum_{j=1}^1(1-\sigma_j^2)^{2k_\delta}(y^\delta,u_j)^2\chi_{\Omega_\delta}&> \tau^2 \delta^2\chi_{\Omega_\delta},\\\label{th0:eq5}
\sum_{j=1}^m(1-\sigma_j^2)^{2k_\delta}(y^\delta,u_j)^2\chi_{\Omega_\delta}&\le \tau^2m\delta^2\chi_{\Omega_\delta},~\mbox{for all }m\ge 2
\end{align}

for the above  delta's, because then $\kdp(1)\chi_{\Omega_\delta}>k_\delta\chi_{\Omega_\delta}$ and $\kdp(m)\chi_{\Omega_\delta}\le k_\delta\chi_{\Omega_\delta}$ for all $m\ge 2$ and therefore $\mdp\chi_{\Omega_\delta} = \arg\max_{m\in\N} \kdp(m)\chi_{\Omega_\delta}= 1$. First, since  

$$(1-\sigma_1^2)^{2k_\delta}(y^\delta,u_1)^2=(1-\sigma_1^2)^{2k_\delta}\left((y^\dagger,u_1) + \delta(Z,u_1)\right)^2$$
there holds

\begin{align*}
(1-\sigma_1^2)^{2k_\delta}(y^\delta,u_1)^2\chi_{\Omega_\delta}&\le(1-\sigma_1^2)^{2k_\delta}\left(\sigma_1 + \delta\left(\sqrt{\frac{9}{8}}-1\right)\frac{\sigma_1}{\delta}\right)^2 =\frac{4}{3}\tau^2\delta^2 \frac{9}{8}\le \frac{3}{2}\tau^2\delta^2,\\
(1-\sigma_1^2)^{2k_\delta}(y^\delta,u_1)^2\chi_{\Omega_\delta}&>(1-\sigma_1^2)^{2k_\delta}(\sigma_1+0)^2= \tau^2\delta^2\chi_{\Omega_\delta}
\end{align*}

by definition of $k_\delta$ and $\Omega_\delta$; note that $(y^\dagger,u_j)=\sigma_j(x^\dagger,v_j)$. The assertion \eqref{th0:eq4} follows from the second line. Similar, since $\sigma_2=\sigma_1$ there holds

\begin{equation*}
(1-\sigma_2^2)^{2k_\delta}(y^\dagger,u_2)\chi_{\Omega_\delta}\le (1-\sigma_1)^{2k_\delta}\sigma_1^2\frac{1}{2} = \frac{2}{3}\tau^2\delta^2.
\end{equation*}

We deduce 

\begin{align*}
&\sum_{j=1}^m(1-\sigma_j^2)^{2k_\delta}(y^\delta,u_j)^2\chi_{\Omega_\delta}\\ = &(1-\sigma_1^2)^{2k_\delta}\sigma_1^2(y^\delta,u_1)^2\chi_{\Omega_\delta} + (1-\sigma_2^2)^{2k_\delta}\sigma_2^2(y^\delta,u_2)^2\chi_{\Omega_\delta}+\sum_{j=3}^m(1-\sigma_j^2)^{2k_\delta}(y^\delta,u_j)^2\chi_{\Omega_\delta}\\
\le& \frac{4}{3}\tau^2\delta^2+\frac{2}{3}\tau^2\delta^2 + \tau^2(m-2)\delta^2= \tau^2m\delta^2,
\end{align*}

which proves \eqref{th0:eq5}. The proof is accomplished.

\subsection{Proof of the corollaries}
In this section we prove the four corollaries. Corollary \ref{cor1} directly follows from Theorem \ref{th2} together with a well-known result for general source conditions. Corollary 2 from \cite{mathe2008general} states that for every $x^\dagger\in\mathcal{X}$ there exists a concave index function $\varphi$ and an element $\xi\in\mathcal{X}$ with $x^\dagger= \varphi(K^*K)\xi$; note that the result there is formulated for injective operators, which is no restriction, since $K^+y^\dagger\in\mathcal{N}(K)^\perp$ and hence we can replace $K$ with the restriction of $K$ onto $\mathcal{N}(K)^\perp$. Clearly we can assume that $\varphi$ is strictly monotonically increasing. In order to finish the proof of Corollary \ref{cor1} we have to show that $\varphi$ fulfills Assumption \ref{as1}. Let $C_k$ be such that 

\begin{align*}
\sup_{0<\lambda< 1}(1-\lambda)^k\varphi(\lambda) = \left(1-\frac{C_k}{k}\right)^k\varphi\left(\frac{C_k}{k}\right)
\end{align*}

If $C_k\le 1$ there holds 

\begin{align}\label{cor1:eq1}
\left(1-\frac{C_k}{k}\right)^k\varphi\left(\frac{C_k}{k}\right)\le \varphi\left(\frac{1}{k}\right).
\end{align}

Otherwise, since $\varphi$ is concave we have that

\begin{align}\label{cor1:eq2}
\left(1-\frac{C_k}{k}\right)^k\varphi\left(\frac{C_k}{k}\right)\le\left(1-\frac{C_k}{k}\right)^kC_k\varphi\left(\frac{1}{k}\right)\le e^{-C_k}C_k\varphi\left(\frac{1}{k}\right)\le \varphi\left(\frac{1}{k}\right)
\end{align}

and \eqref{cor1:eq1} and \eqref{cor1:eq2} together imply that $\varphi$ is a qualification of Landweber iteration.  Since $\varphi$  is concave and strictly monotonically increasing it follows that $\varphi^{-1}$ is convex and strictly monotonically increasing and therefore also $f(x):=\left(\varphi^{-1}\right)^2(x)$. Then for $g(x):=xf(x)$ and $\lambda\in[0,1]$ there holds

\begin{align*}
&g(\lambda x+ (1-\lambda)y)\\
 \le &(\lambda x + (1-\lambda) y )(\lambda f(x)+(1-\lambda) f(y))\\
=&\lambda x f(x)+(1-\lambda)y f(y) - \lambda(1-\lambda)x  f(x) -\lambda(1-\lambda)yf(y)\\
&\qquad\qquad\qquad\qquad+\lambda(1-\lambda)yf(x)+\lambda(1-\lambda)xf(y)\\
&=\lambda x f(x)+(1-\lambda)y f(y) + \lambda(1-\lambda)(y-x)\left(f(x)-f(y)\right)\\
&\le \lambda x f(x)+(1-\lambda)y f(y) = \lambda g(x)+(1-\lambda)g(y),
\end{align*}

where we used convexity of $f$ and the fact that it is increasing in the first and fourth step. Thus $g$ is convex and consequently Assumption \ref{as1} is fulfilled. Finally, Theorem \ref{th2} implies
$$\mathbb{P}\left(\|\xdp - x^\dagger\|\le \|\xi\|^2 \varphi^2\left(\Theta^{-1}\left(\frac{\delta^2}{\|\xi\|^2}\right)\right)\right)\to 1,$$
 as $\delta/\rho\to0$. The proof of Corollary \ref{cor1} is finished with the fact that $\varphi^2\left(\Theta^{-1}(x)\right)\to 0$ for $x\to0$.
 
  Corollary \ref{cor2} can be deduced from Theorem \ref{th1} and the following proposition.

\begin{proposition}\label{prop}
	For either polynomially ill-posed problems under H\"older source conditions or exponentially ill-posed problems under logarithmic source conditions there exists $c>0$ such that
	
	$$\min_{k,m\in\N} \sup_{x^\dagger\in\mathcal{X}_{\varphi,\rho}}\|x_{k,m}^\delta-x^\dagger\|\omd \ge c \rho \varphi\left(\Theta^{-1}\left(\frac{\delta}{\rho}\right)\right)\omd,$$
	
	with $c=\min\left(\frac{c'}{\sqrt{2}e},\sqrt{\frac{1}{5}}\left(1-e^{-\frac{1}{2}}\right)\right)$ and $c'=c'(q,\nu,p)$ given below.
	
\end{proposition}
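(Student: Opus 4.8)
The plan is to bound, for each fixed pair $(k,m)$ and each realisation in $\Omega_{\delta/\rho}$, the inner supremum from below by testing the estimator against a cleverly chosen $x^\dagger\in\mathcal{X}_{\varphi,\rho}$. The crucial structural observation is that the supremum sits \emph{inside}, so the worst-case $x^\dagger$ may depend on $(k,m)$ and even on the sample. Writing out the singular coefficients,
$$\left(x^\delta_{k,m}-x^\dagger,v_j\right)=\frac{1-(1-\sigma_j^2)^k}{\sigma_j}(y^\delta-y^\dagger,u_j)-(1-\sigma_j^2)^k(x^\dagger,v_j)\qquad(j\le m),$$
while for $j>m$ the coefficient is just $-(x^\dagger,v_j)$. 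I would test against three families: (i) $x^\dagger=0$, giving the pure data-propagation term $\delta^2\sum_{j=1}^m\frac{(1-(1-\sigma_j^2)^k)^2}{\sigma_j^2}(Z,u_j)^2$; (ii) $x^\dagger=\rho\,\varphi(\sigma_{m+1}^2)v_{m+1}$, whose $(m+1)$-st coefficient is \emph{noise-free}, so $\|x^\delta_{k,m}-x^\dagger\|\ge\rho\,\varphi(\sigma_{m+1}^2)$; and (iii) $x^\dagger=s\,\rho\,\varphi(\sigma_{j_0}^2)v_{j_0}$ for an index $j_0\le m$ and a sign $s\in\{\pm1\}$ chosen opposite to $(Z,u_{j_0})$, so that the noise \emph{adds} in absolute value and $\|x^\delta_{k,m}-x^\dagger\|\ge(1-\sigma_{j_0}^2)^k\rho\,\varphi(\sigma_{j_0}^2)$. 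The inner supremum is at least the maximum of the three, so it suffices to show that for every $(k,m)$ at least one of them is $\gtrsim\rho\,\varphi(\alpha_*)$ on $\Omega_{\delta/\rho}$, where $\alpha_*=\Theta^{-1}(\delta^2/\rho^2)$ and $m_*=\lceil x_{\alpha_*}\rceil$ as in the proof of Theorem \ref{th2}; recall $\varphi^2(\alpha_*)\rho^2=\tfrac{x_{\alpha_*}}{\alpha_*}\delta^2$ by \eqref{rem1:eq1} and $\sigma_{m_*}^2\le\alpha_*$.

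I would split according to the effective dimension $\bar{\ell}:=\min(m,m_{2k})$, with $m_{2k}=\max\{j:\sigma_j^2>(2k-1)^{-1}\}$, fixing a threshold $c_0\ge 2$. If $m<c_0 m_*$ I use test (ii): since $\sigma_{m+1}^2\ge\sigma_{c_0 m_*}^2$ and $\sigma_{m_*}^2\asymp\alpha_*$ (here the explicit decay \eqref{sec3:eq4}/\eqref{sec3:eq5} enters), in the H\"older case $\varphi^2(\sigma_{c_0m_*}^2)\asymp\alpha_*^\nu=\varphi^2(\alpha_*)$ and in the logarithmic case $\varphi^2(\sigma_{c_0 m_*}^2)\asymp(-\log\alpha_*)^{-p}=\varphi^2(\alpha_*)$, because a constant power of $\alpha_*$ only rescales the logarithm; either way the bound is $\ge c_1\rho\,\varphi(\alpha_*)$. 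If instead $m\ge c_0m_*$ but $m_{2k}<c_0 m_*$ (strong regularisation, $k\lesssim\alpha_*^{-1}$) I use test (iii), choosing $j_0\le m_{2k}<m$ so that $\sigma_{j_0}^2$ lies within a fixed factor of $1/k$; this is possible by the explicit decay and maximises the approximation profile, yielding $(1-\sigma_{j_0}^2)^k\gtrsim e^{-1}$ and $\varphi(\sigma_{j_0}^2)\asymp\varphi(\alpha_*)$, so the bound is $\ge c_2\rho\,\varphi(\alpha_*)$ (the factor $e^{-1}$ accounting for the $e$ in the stated constant). The worse of $c_1,c_2$ produces the first constant $c'/(\sqrt2\,e)$, with $c'=c'(q,\nu,p)$.

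In the remaining regime $\bar{\ell}\ge c_0 m_*\ge 2m_*$ I use test (i) restricted to the block $B=(m_*,\bar{\ell}]\subset[1,m]$. For $j\in B$ one has $\sigma_j^2>(2k-1)^{-1}$, hence $1-(1-\sigma_j^2)^k\ge 1-e^{-k\sigma_j^2}\ge 1-e^{-1/2}$, and $\sigma_j^2\le\sigma_{m_*}^2\le\alpha_*$, so $\frac{(1-(1-\sigma_j^2)^k)^2}{\sigma_j^2}\ge\frac{(1-e^{-1/2})^2}{\alpha_*}$. On $\Omega_{\delta/\rho}$, Proposition \ref{prop0} gives $\tfrac{81}{100}m\le\sum_{j=1}^m(Z,u_j)^2\le\tfrac{121}{100}m$ for all $m\ge m_*$, whence $\sum_{j\in B}(Z,u_j)^2=\sum_{j=1}^{\bar{\ell}}-\sum_{j=1}^{m_*}\ge\tfrac{81}{100}\bar{\ell}-\tfrac{121}{100}m_*\ge\tfrac{41}{100}m_*$ using $\bar{\ell}\ge 2m_*$. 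Combining, the variance is $\ge(1-e^{-1/2})^2\tfrac{41}{100}\tfrac{m_*}{\alpha_*}\delta^2\ge\tfrac{41}{100}(1-e^{-1/2})^2\varphi^2(\alpha_*)\rho^2$, so test (i) yields at least $\sqrt{1/5}\,(1-e^{-1/2})\,\rho\,\varphi(\alpha_*)$. Setting $c=\min\!\big(c'/(\sqrt2\,e),\sqrt{1/5}(1-e^{-1/2})\big)$ and recalling $m_*\ge x_{\alpha_*}$ finishes the proof.

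The main obstacle is the bookkeeping of constants so that a \emph{single} threshold $c_0$ and a single $c'$ serve \emph{both} the polynomial/H\"older and the exponential/logarithmic settings: these behave very differently (e.g. $\sigma_{c_0m_*}^2\asymp\alpha_*$ in the first but $\sigma_{c_0m_*}^2\approx\alpha_*^{c_0}$ in the second), and it is only after composing with $\varphi$ that the discrepancies collapse into harmless multiplicative constants. One must also check that the three cases $m<c_0m_*$, $\{m\ge c_0m_*,\,m_{2k}<c_0m_*\}$ and $\bar{\ell}\ge c_0m_*$ exhaust all $(k,m)$ and that the block $B$ is non-empty with both endpoints $\ge m_*$ (this is what forces $c_0\ge2$), together with the harmless edge case of very small $k$, where the estimator is close to $0$ and test (iii) is trivial. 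The remaining ingredients are the elementary Landweber bounds \eqref{pr:eq0}--\eqref{pr:eq00} and the concentration of Proposition \ref{prop0}.
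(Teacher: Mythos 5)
Your proposal is correct and follows essentially the same route as the paper's proof: the same three-way split into discretisation error (small $m$), approximation error (small $k$, via the sign-adapted rank-one test), and variance (both large), with the same thresholds ($m\asymp 2m_*$, $k\asymp \sigma_{2m_*}^{-2}$), the same block-concentration argument on $\Omega_{\delta/\rho}$ over $(m_*,\,\cdot\,]$ yielding the factor $\tfrac{41}{100}(1-e^{-1/2})^2$, and the identity \eqref{rem1:eq1}. The only substantive difference is organisational — you run a direct case analysis with explicit test elements $x^\dagger$, while the paper argues by contradiction about where the optimal $(k,m)$ must lie — and the underlying estimates coincide.
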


\begin{proof}
	We have
	
	\begin{equation}
	\|x_{k,m}^\delta-x^\dagger\|^2=\sum_{j=1}^m\left\{\frac{1-(1-\sigma_j^2)^k}{\sigma_j}(y^\delta-y^\dagger,u_j) + (1-\sigma_j^2)^k(x^\dagger,v_j)\right\}^2 + \sum_{j=m+1}^\infty(x^\dagger,v_j)^2.
	\end{equation}
	
	We argue by contradiction and assume that 
	
	\begin{equation}\label{prop:eq1}
	\min_{k,m\in\N} \sup_{x^\dagger\in\mathcal{X}_{\varphi,\rho}}\|x_{k,m}^\delta-x^\dagger\|\omd < c \rho \varphi\left(\Theta^{-1}\left(\frac{\delta}{\rho}\right)\right)\omd
	\end{equation}
	
	for $\delta/\rho$ small enough. Let $m_*$ be the index defined at the beginning of the section. Remember that either $K$ is polynomially ill-posed under H\"older source condition (i.e., $\sigma_j^2=j^{-q}$ and $\varphi(t)=t^{\nu/2}$) or $K$ is exponentially ill-posed under logarithmic source condition (i.e., $\sigma_j^2=\exp(-aj)$ and $\varphi(t)=\left(-\log(t)\right)^{-p/2}$). Then it holds that
	
	\begin{equation}\label{prop:eq2}
      \varphi^2\left(\sigma_{2m_*+1}^2\right) = \frac{\varphi^2\left(\sigma_{2m_*+1}^2\right)}{\varphi^2\left(\sigma_{m_*+1}^2\right)} \varphi^2\left(\sigma_{m_*+1}^2\right)\ge c' \varphi^2\left(\Theta^{-1}\left(\frac{\delta^2}{\rho^2}\right)\right)
\end{equation}
	
	with $c'=\min\left(2^{-q\nu},2^{-p}\right)$. Consequently,
	
		$$\sup_{x^\dagger\in\mathcal{X}_{\varphi,\rho}}\sum_{j=2m_*+1}^\infty(x^\dagger,v_j)^2 \ge \varphi^2\left(\sigma_{2m_*+1}^2\right)\rho^2 \ge c'\rho^2\varphi^2\left(\Theta^{-1}\left(\frac{\delta^2}{\rho^2}\right)\right),$$

	and we deduce from assumption \eqref{prop:eq1} that an optimal $m$ has to be larger than $2m_*$. Further, 
	
	\begin{align*}
	&\min_{k,m\in\N} \sup_{x^\dagger\in\mathcal{X}_{\varphi,\rho}}\|x_{k,m}^\delta-x^\dagger\|^2\\
	\ge &\min_{k\in\N,m\ge 2m_*}\sup_{x^\dagger\in\mathcal{X}_{\varphi,\rho}}\sum_{j=1}^m\left\{\frac{1-(1-\sigma_j^2)^k}{\sigma_j}(y^\delta-y^\dagger,u_j) + (1-\sigma_j^2)^k(x^\dagger,v_j)\right\}^2\\
		\ge &\min_{k\in\N}\sup_{\xi\in\mathcal{X}_{\varphi,\rho}}\sum_{j=1}^{2m_*}\left\{\frac{1-(1-\sigma_j^2)^k}{\sigma_j}(y^\delta-y^\dagger,u_j) + (1-\sigma_j^2)^k(x^\dagger,v_j)\right\}^2,
	\end{align*}
	
	where we dropped the discretisation error in the first step and used monotonicity in $m$ in the second step. Because of the supremum we can assume that both terms of each summand have the same sign; this will make the sum only larger. Therefore, we expand the square and drop the mixing terms
	
	\begin{align*}
	&\min_{k\in\N}\sup_{x^\dagger\in\mathcal{X}_{\varphi,\rho}}\sum_{j=1}^{2m_*}\left\{\frac{1-(1-\sigma_j^2)^k}{\sigma_j}(y^\delta-y^\dagger,u_j) + (1-\sigma_j^2)^k(x^\dagger,v_j)\right.\\
	= &\min_{k\in\N}\sup_{x^\dagger\in\mathcal{X}_{\varphi,\rho}}\left\{\sum_{j=1}^{2m_*}\frac{(1-(1-\sigma_j^2)^k)^2}{\sigma_j^2}(y^\delta-y^\dagger,u_j)^2+(1-\sigma_j^2)^{2k}(x^\dagger,v_j)^2\right.\\
	&\qquad\qquad\left.  +2(1-\sigma_j^2)^k\frac{1-(1-\sigma_j^2)^k}{\sigma_j}(y^\delta-y^\dagger,u_j)(x^\dagger,v_j)\right\}\\
	&\ge\min_{k\in\N}\sup_{x^\dagger\in\mathcal{X}_{\varphi,\rho}}\left\{\sum_{j=1}^{2m_*}\frac{(1-(1-\sigma_j^2)^k)^2}{\sigma_j^2}(y^\delta-y^\dagger,u_j)^2+\sum_{j=1}^{2m_*}(1-\sigma_j^2)^{2k}(x^\dagger,v_j)^2\right\}
	\end{align*}
	
Finally, let $k_o:=\max\left\{k\in\N~:~ \sigma_{2m_*}^2 \le k^{-1}\right\}$. Then we directly see that for $\delta/\rho$ small enough

\begin{align*}
&\min_{k\le k_o}\sup_{x^\dagger\in\mathcal{X}_{\varphi,\rho}} \sum_{j=1}^{2m_*}(1-\sigma_j^2)^{2k}\varphi^2(\sigma_j^2)(\xi,v_j)^2\\
= &\min_{k\le k_o}\max_{j=1,...,2m_*}(1-\sigma_j^2)^{2k}\varphi^2(\sigma_j^2)\rho^2\ge\rho^2 \min_{k\le k_o}(1-\sigma_{2m_*}^2)^{2k}\varphi^2(\sigma_{2m_*}^2)\\
=&\rho^2(1-\sigma_{2m_*}^2)^{2k_o}\varphi^2(\sigma_{2m_*}^2)\ge \rho^2\frac{e^{-2\sigma_{2m_*}^2k_o}}{2}\varphi^2(\sigma_{2m_*}^2)    \ge \frac{\rho^2}{2e^2} \varphi^2\left(\sigma_{2m_*}^2\right)\\
 \ge &\frac{c'}{2e^2} \rho^2 \varphi^2\left(\Theta^{-1}\left(\frac{\delta^2}{\rho^2}\right)\right)
\end{align*}

and deduce from assumption \eqref{prop:eq2} that an optimal $k$ has to be larger then $k_o$. Now we drop the approximation error and again use monotonicity to obtain that

\begin{align*}
&\min_{k\in\N}\sup_{\xi\in\mathcal{X}_{\varphi,\rho}}\sum_{j=1}^{2m_*}\frac{(1-(1-\sigma_j^2)^k)^2}{\sigma_j^2}(y^\delta-y^\dagger,u_j)^2+ \sum_{j=1}^{2m_*}(1-\sigma_j^2)^{2k}(x^\dagger,v_j)^2 \\
  \ge &\min_{k\ge k_o}\sum_{j=1}^{2m_*}\frac{(1-(1-\sigma_j^2)^k)^2}{\sigma_j^2}(y^\delta-y^\dagger,u_j)^2
\ge \sum_{j=m_*+1}^{2m_* } \frac{(1-(1-\sigma_j^2)^{k_o})^2}{\sigma_j^2}(y^\delta-y^\dagger,u_j)^2\\
\ge &\frac{(1-(1-\sigma_{2m_*}^2)^{k_o})^2}{\sigma_{m_*+1}^2}\sum_{j=m_*+1}^{2m_*}(y^\delta-y^\dagger,u_j)^2\ge \frac{\left(1-e^{-k_o\sigma_{2m_*}^2}\right)^2}{\sigma_{m_*+1}^2}\sum_{j=m_*+1}^{2m_*}(y^\delta-y^\dagger,u_j)^2\\
  = &\frac{\left(1-e^{-(k_o+1)\sigma_{2m_*}^2 \frac{k_o}{k_o+1}}\right)^2}{\sigma_{m_*+1}^2}\sum_{j=m_*+1}^{2m_*}(y^\delta-y^\dagger,u_j)^2 \ge \frac{\left(1-e^{-\frac{k_o}{k_o+1}}\right)^2}{\sigma_{m_*+1}^2} \sum_{j=m_*+1}^{2m_*}(y^\delta-y^\dagger,u_j)^2\\
  \ge &\frac{\left(1-e^{-\frac{1}{2}}\right)^2}{\sigma_{m_*+1}^2}\sum_{j=m_*+1}^{2m_*}(y^\delta-y^\dagger,u_j)^2.
\end{align*}

Finally, by definition of $\Omega_{\delta/\rho}$, 

\begin{align*}
\frac{\sum_{j=m_*+1}^{2m_*}(y^\delta-y^\dagger,u_j)^2}{\sigma_{m_*+1}^2}\omd &= \frac{\sum_{j=1}^{2m_*}(y^\delta-y^\dagger,u_j)^2 - \sum_{j=1}^{m_*}(y^\delta-y^\dagger,u_j)^2}{\sigma_{m_*+1}^2}\omd\\
 &\ge \frac{1}{\sigma_{m_*+1}^2}\left(\left(\frac{9}{10}\right)^22m_*\delta - \left(\frac{11}{10}\right)^2m_* \delta^2\right)\omd\\
 &\ge \frac{2}{5}\frac{m_*}{m_*+1}\frac{(m_*+1)\delta^2}{\sigma_{m_*+1}^2}\omd\ge \frac{1}{5}\varphi^2\left(\Theta^{-1}\left(\frac{\delta^2}{\rho^2}\right)\right)\rho^2\omd
\end{align*}

Putting the above two estimates together yields

\begin{align*}
	&\min_{k,m\in\N} \sup_{x^\dagger\in\mathcal{X}_{\varphi,\rho}}\|x_{k,m}^\delta-x^\dagger\|^2\omd	\ge \frac{\left(1-e^{-\frac{1}{2}}\right)^2}{5} \varphi^2\left(\Theta^{-1}\left(\frac{\delta^2}{\rho^2}\right)\right)\rho^2\omd,
\end{align*}

which contradicts the assumption \eqref{prop:eq2} and concludes the proof.
\end{proof}

\begin{proof}[Proof of Corollary \ref{cor3}]
It suffices to show that there exists $C=C(\varepsilon)$ such that
	
	\begin{align}\label{cor3:eq1}
	\mathbb{P}\left(\frac{\|Kx^\delta_{2k^\delta_{\rm dp}(m), m} - P_my^\delta\|}{\|Kx^\delta_{k^\delta_{\rm dp}(m), m} - P_my^\delta\|}\ge \frac{1-\varepsilon}{\tau}\right)\to 1
	\end{align}
	
	for all $m\ge Cm_*$, as $\delta/\rho\to 0$.	From \eqref{lem1:eq3} and the explicit form of the $\sigma_j$'s one can deduce that there exists $C_1\in\N$ such that for   $m'= C_1m_*$ there holds 
	
	\begin{align*}
	(1-\sigma_{m_*+1}^2)^{2\kdp(m)}\omd&\ge	e^{-\sigma_{m'+1}^2\kdp(m)}\omd\ge e^{-\sigma_{m'+1}^2\sigma_{m_*}^{-2}}\omd\\
	&\ge\omd \begin{cases}e^{-\frac{1}{C_1^q}} &\mbox{, for } \sigma_j^2=j^{-q}\\
		             e^{-e^{a(1-C_1)m_*}} &\mbox{, for } \sigma_j^2 = e^{-aj}\end{cases}\\
		             &\ge   \sqrt{1-\varepsilon}\omd.
	\end{align*}

	Further, there exist $\varepsilon'>0$ and $K_2\in\N$ such that for all $m\ge K_2m'=K_2K_1m_*$ there holds 
	
    \begin{align*}
	(1-\varepsilon')\sqrt{m}-(2+\varepsilon')\sqrt{m'} &= \left(1-\varepsilon'-(2+\varepsilon')\sqrt{\frac{m'}{m}}\right)m\\
	&\ge\left(1-\varepsilon' - \frac{2+\varepsilon'}{\sqrt{K_2}}\right) m   \ge \sqrt{1-\varepsilon}m.
	\end{align*}
	
	We define 
	
	\begin{equation*}
	\Omega_{\delta/\rho}':=\Omega_{\delta/\rho} \cap\left\{\left|\sqrt{\sum_{j=1}^m(y^\delta-y^\dagger,u_j)^2}-\sqrt{m}\delta\right|\le   \varepsilon'\sqrt{m}\delta,~\forall m\ge m_*\right\}.
	\end{equation*}

	Note that $\mathbb{P}\left(\Omega_{\delta/\rho}'\right)\to 0$ as $\delta/\rho\to 0$ by Proposition \ref{prop0}. Similar as in the proof of Lemma \ref{lem1} we deduce
	
		\begin{align*}
	&\frac{\|Kx^\delta_{2k^\delta_{\rm dp}(m), m} - P_my^\delta\|}{\|Kx^\delta_{k^\delta_{\rm dp}(m), m} - P_my^\delta\|}\chi_{\Omega'_{\delta/\rho}}\\
	&\ge \frac{(1-\sigma_{m'+1}^2)^{-2k}}{\tau\sqrt{m}\delta}\sqrt{\sum_{j=m'+1}^m(y^\delta,u_j)^2}\chi_{\Omega'_{\delta/\rho}}\\
	&\ge \frac{\sqrt{1-\varepsilon'}}{\tau\sqrt{m}\delta}\left(\sqrt{\sum_{j=1}^m(y^\delta-y^\dagger,u_j)^2}-\sqrt{\sum_{j=1}^{m_*}(y^\delta-y^\dagger,u_j)^2}-\sqrt{\sum_{j=m_*+1}^m(y^\dagger,u_j)^2}\right)\chi_{\Omega'_{\delta/\rho}}\\
	&\ge \frac{1}{\tau\sqrt{m}\delta}\left((1-\varepsilon')\sqrt{m}\delta - (2+\varepsilon')\sqrt{m_*}\delta\right)\chi_{\Omega'_{\delta/\rho}}\ge \frac{1-\varepsilon'}{\tau}\chi_{\Omega'_{\delta/\rho}}
	\end{align*}
	
	for all $m\ge K_1K_2 m_*$, thus \eqref{cor3:eq1} holds for $C=K_1K_2$.
	
\end{proof}

\begin{proof}[Proof of Corollary \ref{cor4}]
	
	We give only a sketch. We focus on two things, namely that first the \texttt{while}-loop terminates with a discretisation dimension $\mdp\le Cm_*$ for some $C$ large enough, and that 
	
	\begin{equation}
		\alpha^\delta_{\rm dp}\omd\ge \sigma_{m_*}^2\omd.
	\end{equation}

	Let $ \alpha>0$, $t=1/8\tau$  and $m\in\N$ with $\alpha\le \sigma_m^2$. Then there holds

	\begin{align*}
	\max_{j=1,...,m}\frac{\sigma_j^2+\alpha}{\sigma_j^2+t\alpha}\le \frac{\sigma_m^2+\alpha}{\sigma_m^2+t\alpha}< \frac{\sigma_m^2+\sigma_m^2}{\sigma_m^2}\le 2.
	\end{align*}

	Therefore,

	\begin{align}\label{subsec:eq3}
	\frac{\|P_mK x_{t\alpha,m}^\delta-P_my^\delta\|}{\|P_mKx_{\alpha,m}^\delta - P_my^\delta\|} = \sqrt{\frac{\sum_{j=1}^m\frac{t^2\alpha^2}{(t\alpha+\sigma_j^2)^2}(y^\delta,u_j)^2}{\sum_{j=1}^m\frac{\alpha^2}{(\alpha+\sigma_j^2)^2}(y^\delta,u_j)^2}}< t \max_{j=1,...,m}\frac{\sigma_j^2+\alpha}{\sigma_j^2+t\alpha}\le 2t=\frac{1}{4\tau},
	\end{align}
	
	whenever $\alpha\le \sigma_m^2$. Now we show that for $C=\left(\frac{2}{(\tau-1)\sqrt{t}^3}\right)^2$ there holds

	\begin{equation}\label{subsec:eq2}
	\alpha_{\rm dp}^\delta(m)\omd\ge \sigma_{m_*}^2\omd
	\end{equation}
	
	for all $m\ge Cm_*$, and we proceed in a similar fashion as in \eqref{lem1:eq3}. For $\alpha=\alpha^\delta_{\rm dp}(m)$ there holds
	
	\begin{align*}
	\frac{\tau-1}{2}\sqrt{m}\delta\omd &\le \sqrt{\alpha \varphi^2(\alpha) \rho^2} = t^{-\frac{3}{2}}\sqrt{t\alpha \left(t\varphi(\alpha)\right)^2\rho^2}\le t^{-\frac{3}{2}} \sqrt{t\alpha \varphi^2(t\alpha)\rho^2}
	\end{align*}
	
	by concavity (note that $t<1$), thus
	
	\begin{align*}
	t\alpha \varphi^2(t\alpha)\rho^2\omd &\ge \left(\frac{\tau-1}{2}\right)^2 t^3 m \delta^2\omd \ge m_* \delta^2\omd\ge \sigma_{m_*}^2\varphi^2\left(\sigma_{m_*}^2\right)\rho^2
	\end{align*}
	
	and this implies \eqref{subsec:eq2} by monotonicity.  Consequently,

	\begin{align*}
	\frac{\|P_mK x^\delta_{t\alpha,m} - P_my^\delta\|}{\|P_mK x^\delta_{\alpha,m}-P_my^\delta\|}\omd&\ge \frac{1}{\tau\sqrt{m}\delta} \sqrt{\sum_{j=1}^m\frac{t^2\alpha^2}{(t\alpha+\sigma_j^2)^2}(y^\delta,u_j)^2}\omd\\
	&\ge \frac{1}{\tau\sqrt{m}\delta}\frac{t\alpha}{t\alpha+\sigma_{m_*+1}^2} \sqrt{\sum_{j=m_*+1}^m(y^\delta,u_j)^2}\omd\\
	&\ge \frac{1}{\tau\sqrt{m}\delta}\frac{t}{t+t}\left(\frac{9}{10}-\frac{21}{10}\frac{4}{21}\right)\sqrt{m}\omd&\\
	&\ge \frac{1}{4\tau}\omd
	\end{align*}
	
	for all $m\ge Cm_*$. We deduce that the \texttt{while}-loop terminates with $\mdp\le Cm_*$ and moreover, \eqref{subsec:eq3} implies \eqref{subsec:eq2}.
	
\end{proof}

\section{Numerical experiments}\label{sec:5}
In this section we investigate our method numerically. We treat four classic test problems from the popular open-source Matlab toolbox  \cite{hansen1994regularization}, namely \texttt{phillips}, \texttt{deriv2}, \texttt{gravity} and \texttt{heat}. These are discretisations of Fredholm integral equations via Galerkin box functions and quadrature rules and they cover different types of ill-posedness and solution smoothness.  They constitute of a discretisation of the forward integral operator $K$ into a matrix $A\in\R^{D\times D}$ and discretisations of the exact data $y^\dagger\in \R^{D}$ and the exact solution $x^\dagger\in\R^D$. As measured data we set 

$$(y^\delta,e_j)= (y^\dagger,e_j)+\delta Z_j,\qquad j=1,...,D$$

where $(\cdot,e_j)$ is the $j$-th coordinate and $Z_j$ is i.i.d standard Gaussian white noise with noise level $\delta\in\{ 1,10^{-2},10^{-4}\}$.  We compare our method to the aforementioned sequential early stopping discrepancy principle \cite{blanchard2018optimal}, given by

$$ k^\delta_{\rm es}:=\min\left\{k\in \N~:~ \|A x_k^\delta-y^\delta\|\le \sqrt{D}\delta \right\}.$$

In contrast to the theoretical part, here we do not want to impose the restrictive assumption that we have the singular valued decomposition at hand for discretisation. Instead we rely on the special structure of the integral equations. By continuity of the integration kernel nearby rows of $A$ are similar. Consequently, averaging blocks of rows will be approximately equivalent to considering a lower dimensional discretisation of the integral equation. The singular value decomposition of the discretised problem tends to the one of the infinite-dimensional problem as the dimension of the discretisation goes to infinity and therefore increasing the discretisation dimension will be approximately equivalent to adding additional (high-frequency) singular vectors (of the ideal problem). In this sense the setting is related to the one investigated rigorously in this article. We formulate the approach precisely. We set $D:=2^{l_{\rm max}}$ with $l_{\rm max}:=12$ and $m_l:=2^l$ for $l=1,...,D$.  Then our discretisation operator becomes 

$$P_{m_l}:=\frac{1}{\sqrt{m_l}} \texttt{ eye}(D/m_l)\otimes\texttt{ ones}(1,m_l) $$

where $A\otimes B$ is the Kronecker-product of $A$ and $B$ and where $\texttt{ones}(m_l)_{j}=1$ for $j=1,...,m_l$ is an $m_l$ dimensional vector full of ones and $\texttt{eye}(m_l)_{i,j}=\delta_{ij}$ for $i,j=1,...,D/m_l$ is the $D/m_l\times D/m_l$ identity matrix. For example, setting $D=8$ and $m_l=4$ we obtain 

$$P_2 = \frac{1}{\sqrt{2}}\begin{pmatrix} 1 &1&&&&&&\\&&1&1&&&&\\ &&&&1&1&&\\ &&&&&&1&1\end{pmatrix}\in\R^{4\times 8}.$$

Thus $P_{m_l}A$ is an averaged lower dimensional version of $A$. In every run we terminate the iteration after at most $5*10^8$ iterations, if the stopping criteria is not fulfilled before.

The norm of the right hand side $y^\dagger$ is of order $1$ in all cases, thus it holds that $D \delta^2$ is also of order $1$ for the middle noise level $\delta=10^{-2}$. We therefore have three different scenarios, one of large noise ($\delta=1$) where the overall noise dominates the data, one of average noise ($\delta=10^{-2}$) where noise and data norm are on par, and one of small noise ($\delta=10^{-4}$) where the noise is much smaller than the data norm. 

 As a measurement of the accuracy we calculate the sample mean $e_{\cdot}$ of $\|x_{k^\delta_{\cdot}}-x^\dagger\|$ with $\cdot \in \{{\rm dp}, {\rm es}\}$ for 100 independent runs. We chose $\tau=1.5$, similar as in the recent numerical survey from Werner \cite{werner2018adaptivity}. The mean error for the early stopping discrepancy principle happens to be very large so we also calculate the median $\overline{e}_{\rm es}$ of  $\|x_{k^\delta_{\rm es}}^\delta-x^\dagger\|$ to reduce the impact of outliers. We  mention that the early stopping discrepancy principle often reaches the maximum number of iterations. The error of the modified discrepancy principle turns out to be very concentrated, so we do not depict its median.

Since the main motivation of the sequential early stopping discrepancy principle are its low computational costs, we estimate the numerical complexity of both methods. The main operation in each iteration step of the Landweber method is a matrix-vector multiplication, whose complexity is approximately the size of the dimension of the matrix. Therefore, we set $c_{\rm dp}:=\sum_{l=1}^{l_{\max}} {\rm mean}\left(\kdp(m_l)\right) m_l^2$ to be the complexity of the modified discrepancy principle and $c_{\rm es}:={\rm mean}(k^\delta_{\rm es}) D^2$ and $\overline{c}_{\rm es}:={\rm median}(k^\delta_{\rm es}) D^2$ to be the complexities of the early stopping discrepancy principle. Moreover we show the mean of $\mdp$, i.e., the discretisation dimension chosen by the modified discrepancy principle. In accordance with Corollary \ref{cor3} we use $1/2$ instead of $1/2\tau e^4$ in the \texttt{while}-loop of Algorithm 1. In order to illustrate the impact of this we compare the results to the one where we do not apply the \texttt{while}-loop at all, i.e., for the choices \eqref{int:pc} and \eqref{int:pc1} which are indicated by a bar over the respective variable.
 We display the results in Table \ref{tab:er-phillips}-\ref{tab:er-heat}. 

Interestingly, the complexity of the modified discrepancy principle is considerably lower than the complexity of the early stopping discrepancy principle. This is also true if we consider only the median of the latter and is connected to the slow convergence of the Landweber method and the considerably large fudge parameter $\tau=1.5$. 

In terms of accuracy the mean error of the modified discrepancy principle and the median error of the early stopping discrepancy principle are approximately on par for all noise levels. The mean error of the early stopping discrepancy principle is only comparable for the smallest tnoise level and otherwise of different order.

 Moreover, we see that the \texttt{while}-loop in Algorithm 1 is not affecting the results here very much.

All in all, this small numerical study indicates that the modified discrepancy principle is a computational attractive and efficient method to solve ill-posed integral equations.

There are several open points. E.g., in order to decrease the computational complexity further it would make sense to reduce also the dimension of the data space, e.g., through averaging of the columns of the matrix. Another interesting subject is the role of the fudge parameter $\tau$ in between the early stopping and the modified discrepancy principle.

\begin{table}[hbt!]
	\centering
	\caption{Comparison of modified and early stopping discrepancy principle for \texttt{phillips}.\label{tab:er-phillips}}
	\setlength{\tabcolsep}{4pt}
	\begin{tabular}{c|ccccccccc|}
		\toprule
		$\delta$	& $e_{\rm dp}$ & $\overline{e}_{\rm dp}$ &  $e_{\rm es}$ &  $\overline{e}_{\rm es}$& $c_{\rm dp}$ &$c_{\rm es}$ &$\overline{c}_{\rm es}$ &$m_{\rm dp}$ & $\overline{m}_{dp}^\delta$\\
		e0    & 2.7e0 & 2.7e0      & 7.3e3 &3e0 & 2.2e7 &2.7e14 &2.5e8 &6.8e0& 2.3e0\\
		e-2  & 2.9e-1 &3.0e-1   & 7.3e1& 4.8e-1 & 7.0e7 &2.7e14&7.7e9 &1.1e1&8.1e0\\
		e-4     & 4.6e-2 &4.6e-2     & 8.8e-1& 1.2e-1 & 2.7e9 &3.1e14& 4.5e12 &1.6e1&1.6e1\\		\bottomrule		
	\end{tabular}
\end{table}

\begin{table}[hbt!]
	\centering
	\caption{Comparison of modified and early stopping discrepancy principle for \texttt{deriv2}.\label{tab:er-deriv2}}
	\setlength{\tabcolsep}{4pt}
	\begin{tabular}{c|ccccccccc|}
		\toprule
		$\delta$	& $e_{\rm dp}$ & $\overline{e}_{\rm dp}$ &  $e_{\rm es}$ &  $\overline{e}_{\rm es}$& $c_{\rm dp}$ &$c_{\rm es}$ &$\overline{c}_{\rm es}$ &$\mdp$ & $\overline{m}_{dp}^\delta$\\
		e0    & 8.8e-1 & 9.3e-1      & 6.5e3 &5.8e-1 & 2.2e7 &1.1e14 &1.7e8 &2.9e0& 2.1e0\\
		e-2  & 2.9e-1 &2.9e-1   & 6.4e1& 3.5e-1 & 2.4e7&1.3e14&2.3e9 &3.8e0&2.7e0\\
		e-4     & 1.4e-1 &1.4e-1     & 8.2e-1& 1.5e-1 & 3.7e9 &1.3e14& 6.9e11 &1.6e1&1.5e1\\			\bottomrule	
	\end{tabular}
\end{table}

\begin{table}[hbt!]
	\centering
	\caption{Comparison of modified and early stopping discrepancy principle for \texttt{gravity}.\label{tab:er-gravity}}
	\setlength{\tabcolsep}{4pt}
	\begin{tabular}{c|ccccccccc|}
		\toprule
		$\delta$	& $e_{\rm dp}$ & $\overline{e}_{\rm dp}$ &  $e_{\rm es}$ &  $\overline{e}_{\rm es}$& $c_{\rm dp}$ &$c_{\rm es}$ &$\overline{c}_{\rm es}$ &$\mdp$ & $\overline{m}_{dp}^\delta$\\
		e0    & 1.1e1 & 1.4e1      & 3.0e3 &1.9e1 & 2.4e7 &3.7e14 &2.3e9 &7.6e0& 2.5e0\\
		e-2  & 1.9e0 &1.9e0   & 2.7e1& 2.6e0 & 6.4e8 &3.3e14&1.0e10 &1.0e1&8.6e0\\
		e-4     & 4.3e-1 &4.3e-1     & 4.9e-1& 4.3e-1 & 1.3e11 &2.9e14& 3.3e12 &1.6e1&1.6e1\\			\bottomrule	
	\end{tabular}
\end{table}

\begin{table}[hbt!]
	\centering
	\caption{Comparison of modified and early stopping discrepancy principle for \texttt{heat}.\label{tab:er-heat}}
	\setlength{\tabcolsep}{4pt}
	\begin{tabular}{c|ccccccccc|}
		\toprule
		$\delta$	& $e_{\rm dp}$ & $\overline{e}_{\rm dp}$ &  $e_{\rm es}$ &  $\overline{e}_{\rm es}$& $c_{\rm dp}$ &$c_{\rm es}$ &$\overline{c}_{\rm es}$ &$\mdp$ & $\overline{m}_{dp}^\delta$\\
		e0    & 1.3e1 & 1.3e1      & 4.3e3 &1.6e1 & 2.2e7 &1.6e14 &5.9e8 &3.0e0& 2.1e0\\
		e-2  & 3.2e0 &7.4e0   & 5.8e1& 3.8e0 & 8.4e8 &2.2e14&2.3e10 &1.6e1&8.3e0\\
		e-4     & 6.3e-1 &8.1e-1     & 8.1e-1& 4.2e-1 & 2.5e11 &2.0e14& 1.8e12 &4.7e1&2.4e1\\			\bottomrule	
	\end{tabular}
\end{table}

\begin{funding}
	Funded  by  the  Deutsche  Forschungsgemeinschaft under Germany's Excellence Strategy - GZ 2047/1, Projekt-ID 390685813.
\end{funding}

\bibliographystyle{imsart-number} 
\bibliography{references}    

\end{document}